\newtheorem*{property*}{Property $\mathcal{P}$}
\newtheorem{remark}{Remark}
\newtheorem{theorem}[remark]{Theorem}
\newtheorem{proposition}[remark]{Proposition}
\newtheorem{corollary}[remark]{Corollary}
\newcommand{\Sd}{\operatorname{Sd}}
\title{The $k$-metric dimension of graphs: a general approach}
\author{ A. Estrada-Moreno$^{(1)}$, I. G. Yero$^{(2)}$ and  J. A. Rodr\'{i}guez-Vel\'{a}zquez$^{(1)}$
\\
$^{(1)}${\small Departament d'Enginyeria Inform\`atica i Matem\`atiques,}\\
{\small Universitat Rovira i Virgili,}  {\small Av. Pa\"{\i}sos
Catalans 26, 43007 Tarragona, Spain.} \\{\small
 alejandro.estrada\@@urv.cat, juanalberto.rodriguez\@@urv.cat}\\
    $^{(2)}${\small Departamento de Matem\'aticas, Escuela Polit\'ecnica Superior de Algeciras}\\
{\small Universidad de C\'adiz,} {\small
Av. Ram\'on Puyol s/n, 11202 Algeciras, Spain.} \\ {\small
ismael.gonzalez\@@uca.es}
}
\begin{document}
\maketitle

\begin{abstract}
Let $(X,d)$ be a metric space. A set $S\subseteq X$ is said to be a $k$-metric generator for $X$ if and only if  for any pair of different points $u,v\in X$, there exist at least $k$ points  $w_1,w_2, \ldots w_k\in S$ such that $d(u,w_i)\ne d(v,w_i),\; \mbox{\rm for all}\; i\in \{1, \ldots k\}.$ Let $\mathcal{R}_k(X)$ be the set of metric generators for $X$.
The
$k$-metric dimension  $\dim_k(X)$ of $(X,d)$ is defined as
$$\dim_k(X)=\inf\{|S|:\, S\in \mathcal{R}_k(X)\}.$$  Here, we discuss the $k$-metric
dimension of $(V,d_t)$, where $V$ is the set of vertices of a simple graph $G$ and the metric $d_t:V\times V\rightarrow \mathbb{N}\cup \{0\}$ is defined by $
d_t(x,y)=\min\{d(x,y),t\}
$ from   the geodesic distance $d$ in $G$ and a positive integer $t$.
The case $t\ge D(G)$, where $D(G)$ denotes the diameter of $G$, corresponds to the original theory of $k$-metric dimension and  the case $t=2$ corresponds to the theory of $k$-adjacency dimension. Furthermore, this approach allows us to extend the theory of $k$-metric dimension to the general case of non-necessarily connected graphs.
\end{abstract}

{\it Keywords:}  metric dimension; $k$-metric dimension; $k$-adjacency dimension; metric space.

{\it AMS Subject Classification numbers:}   05C12; 05C76; 54E35

\section{Introduction}

The metric dimension of a general metric space was introduced in 1953 in \cite{Blumenthal1953}
but attracted little attention until, about twenty years later, it was applied to the
distances between vertices of a graph
\cite{Harary1976,Slater1975,Slater1988}.
 Since then it has been frequently used in graph
theory, chemistry, biology, robotics and many other disciplines.
In 2013, in \cite{Sheng2013}, the
theory of metric dimension was developed further for general metric spaces. More recently, this theory has been generalised  in \cite{Estrada-Moreno2013,Estrada-Moreno2014b,Estrada-Moreno2013corona}, again in the context
of graph theory, to the notion of a $k$-metric dimension, where $k$ is any positive integer,
and where the case $k = 1$ corresponds to the original theory.
The idea of the $k$-metric dimension both in the context of graph theory
and general metric spaces  was studied further in \cite{Beardon-RodriguezVelazquez2016}.
This paper deals with the problem of finding the $k$-metric dimension of graphs where the  metric used  is not necessarily the standard one. Given a positive integer $t$ and the geodesic distance $d$ in a graph $G=(V,E)$ we consider the metric  $d_t:V\times V\rightarrow \mathbb{R}$, defined by
$d_t(x,y)=\min\{d(x,y),t\}. $ In this context, $k$-metric generators are called $(k,t)$-metric generators and the $k$-metric dimension is  called $(k,t)$-metric dimension.
The case $t\ge D(G)$, where $D(G)$ denotes the diameter of $G$, corresponds to the original theory of $k$-metric dimension and  the case $t=2$ corresponds to the theory of $k$-adjacency dimension. Furthermore, we would point out that this approach allows us to extend the theory of $k$-metric dimension to the general case of non-necessarily connected graphs.

 The paper is structured as follows. In Section~\ref{sectPrelim}
 we introduce the main concepts and   present some basic results.  Then in Section~\ref{Sect(k,t)dimensional} we discuss a natural problem in the study of the $(k,t)$-metric dimension of a graph $G$ which consists of finding the largest integer $k$ such that
there exists a $(k,t)$-metric generator for $G$.
Section~\ref{bound-k_DimG}
is devoted to study the problem of computing or bounding the $(k,t)$-metric dimension.
 In particular, we give some basic bounds  and  discuss the extreme cases, we construct large families of graphs having a common $(k,t)$-metric generator and   show that for $t\ge 2$ the $(k,t)$-metric dimension of lexicographic product graphs does not depend on the value of $t$. We also show that the $(k,t)$-metric dimension of the corona product of a graph of
order $n$ and some nontrivial graph $H$ equals $n$ times the $(k,2)$-metric dimension of $H$. As a
consequence of this strong relationship, in Section \ref{SectionComputational complexity} we  show that  the problem of computing the $(k,t)$-metric dimension  is NP-hard  for the case in which $k$ is an odd integer.
 Finally, in Section~\ref{SectionFutureWorks} we discuss some problems which are derived from or  related to our previous results.

\section{Terminology and basic tools}\label{sectPrelim}
Let $(X,d)$ be a metric space. If $X$ is a finite set, we denote its
cardinality by $|X|$; if $X$ is an infinite set, we put $|X| =
+\infty$.  A set $A\subseteq X$ is called a metric generator for $(X,d)$ if and only if $d(x,a)=d(y,a)$
for all $a$ in $A$ implies that $x=y$.  Roughly speaking, if an object in
$x$ knows its distance from each point of $A$, then it knows exactly
where it is located in $X$. The class  of metric generators of $X$ is non-empty since $X$ is a \emph{metric generator} for  $(X,d)$.  A metric generator of a metric space $(X,d)$
is, in effect, a \emph{global coordinate system} on $X$. For example,
if $(x_1,\ldots,x_p)$ is an ordered metric generator of $X$, then the
map $\Delta:X \to \mathbb{R}^p$ given by
\begin{equation}\label{151123a}
\Delta(x)= \Big(d(x,x_1), \ldots,d(x,x_p)\Big)
\end{equation}
is injective (for this vector determines $x$), so that $\Delta$ is a
bijection from $X$ to a subset of $\mathbb{R}^p$, and $X$ inherits its
coordinates from this subset.

Now, a set $S\subseteq X$ is said to be a \emph{$k$-metric generator} for $X$ if and only if  for any pair of different points $u,v\in X$, there exist at least $k$ points  $w_1,w_2, \ldots w_k\in S$ such that $$d(u,w_i)\ne d(v,w_i),\; \mbox{\rm for all}\; i\in \{1, \ldots k\}.$$
Let $\mathcal{R}_k(X)$ be the set of $k$-metric generators for $X$.
The
$k$-\emph{metric dimension} $\dim_k(X)$ of $(X,d)$ is defined as
$$\dim_k(X)=\inf\{|S|:\, S\in \mathcal{R}_k(X)\}.$$  As $\inf\varnothing =
+\infty$, this means that $\dim_k(X) = +\infty$ if and only if no
finite subset of $X$ is a $k$-metric generator for $X$. A set $S$ is a $k$-\emph{metric basis} of $X$ if $S\in \mathcal{R}_k(X)$
and $|S|= \dim_k(X)$.

The $k$-metric dimension of metric spaces was studied in \cite{Beardon-RodriguezVelazquez2016} where,  for instance, it was shown  that if $U$ is any
non-empty open subset of any one of the three classical $n$-dimensional geometries of constant curvature, namely Euclidean space $\mathbb{R}^n$, Spherical space $\mathbb{S}^n$ and Hyperbolic space $\mathbb{H}^n$, then $\dim_k(U)=n+k$.
If we consider the discrete metric space  (equivalently, a complete graph), then $\dim_1(X)=|X|-1$ and
$\dim_2(X)=|X|$.
Furthermore, for $k\ge 3$ there are no $k$-metric generators for the discrete metric space. The reader is refereed to \cite{Estrada-Moreno2013,Estrada-Moreno2014b,Estrada-Moreno2013corona} for previous results on the $k$-metric dimension of graphs.

A basic and useful result  on the $k$-metric dimension of metric spaces is the following one.

\begin{theorem}[Monotony of $\dim_k(X)$ with respect to $k$ \cite{Beardon-RodriguezVelazquez2016}]\label{firstConsequenceMonotonyG}
Let $(X,d)$ be a metric space, and $k$ a positive integer. Then,
\begin{enumerate}[{\rm (i)}]
\item if ${\rm dim}_k(X) < +\infty$ then $\dim_k(X) < \dim_{k+1}(X)$;
\item
if ${\rm dim}_k(X) = +\infty$ then $\dim_{k+1}(X) = +\infty$.
\end{enumerate}
\noindent
In particular, $\dim_k(X) +1 \geq \dim_1(X)+ k$.
\end{theorem}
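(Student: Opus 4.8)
The plan is to build everything on the single elementary remark that every $(k+1)$-metric generator is in particular a $k$-metric generator, so that $\mathcal{R}_{k+1}(X)\subseteq\mathcal{R}_k(X)$ and therefore $\dim_k(X)\le\dim_{k+1}(X)$; the real task is to upgrade this to a strict inequality and to handle the infinite cases. Part (ii) is then immediate: if no finite subset of $X$ is a $k$-metric generator, then no finite subset can be a $(k+1)$-metric generator either, since such a set would in particular be a finite $k$-metric generator; hence $\dim_{k+1}(X)=+\infty$.

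For part (i), I would assume $\dim_k(X)<+\infty$ and argue by contradiction, supposing there is a $(k+1)$-metric generator $W$ with $|W|\le\dim_k(X)$. Since $W$ is also a $k$-metric generator, minimality forces $|W|=\dim_k(X)$, so $W$ is in fact a $k$-metric basis. The crux is a one-element-deletion argument: fix any $w\in W$ and put $W'=W\setminus\{w\}$; for an arbitrary pair of distinct points $u,v\in X$ there exist at least $k+1$ elements of $W$ at which $u$ and $v$ receive different distances, and deleting $w$ removes at most one of them, so at least $k$ of them survive in $W'$. Thus $W'\in\mathcal{R}_k(X)$ with $|W'|=\dim_k(X)-1$, contradicting the definition of $\dim_k(X)$ as an infimum. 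Consequently no $(k+1)$-metric generator has cardinality at most $\dim_k(X)$, i.e. $\dim_{k+1}(X)\ge\dim_k(X)+1$ (with $+\infty$ allowed on the left-hand side), which in particular yields $\dim_k(X)<\dim_{k+1}(X)$.

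The displayed inequality $\dim_k(X)+1\ge\dim_1(X)+k$ would then follow by iteration: if $\dim_j(X)<+\infty$ for every $j\in\{1,\dots,k-1\}$, applying (i) repeatedly produces the chain of integers $\dim_1(X)<\dim_2(X)<\cdots<\dim_k(X)$, which forces $\dim_k(X)\ge\dim_1(X)+k-1$; and if $\dim_j(X)=+\infty$ for some $j\le k-1$, then (ii) propagates this up to $\dim_k(X)=+\infty$ and the inequality holds trivially. I do not expect a genuine obstacle, since the argument is short, but the step that needs care is the contradiction in (i): it must be phrased so as to exclude \emph{any} finite $(k+1)$-metric generator of size at most that of a $k$-metric basis, and throughout one should keep the bookkeeping with the symbol $+\infty$ consistent (in particular reading the final inequality under the usual conventions when $\dim_1(X)=+\infty$).
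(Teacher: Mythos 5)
The paper does not actually prove this theorem; it is quoted from the cited reference \cite{Beardon-RodriguezVelazquez2016}, so there is no in-paper argument to compare against. Your proof is correct and is the standard one: part (ii) from $\mathcal{R}_{k+1}(X)\subseteq\mathcal{R}_k(X)$, and part (i) from the one-element-deletion argument showing that a $(k+1)$-metric basis minus any point is still a $k$-metric generator, with the final chain obtained by iteration; the only (shared, implicit) caveat is that the statement presupposes $|X|\ge 2$, which also guarantees that the set $W$ from which you delete a point is non-empty.
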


Given a positive integer $t$ and a metric space $(X,d)$, the function $d_t:X\times X\rightarrow \mathbb{R}$, defined by
\begin{equation}\label{distFunctG}
d_t(x,y)=\min\{d(x,y),t\}
\end{equation}
is a metric on $X$.
If $d(x,y)\le t$, then $d_t(x,y)=d(x,y)$, so that the $d_t$-metric topology coincides with the $d$-metric topology.
Furthermore, if $d(x,y)\ge 1$, then $d_1(x,y)=1$, so that if $d(x,y)\ge 1$ for all $x,y\in X$, then the $d_1$-metric topology coincides with the discrete-metric topology (equivalently, the topology of a complete graph).
The study of the $k$-metric dimension of $(X,d_t)$  was introduced in \cite{Beardon-RodriguezVelazquez2016} as a tool to study the $k$-metric dimension of the join of two metric spaces, and it was introduced previously in the particular context of graphs \cite{RV-F-2013}.

The next result shows that the $k$-metric dimension
of a single metric space varies when we distort the metric from $d$ to $d_t$ as above. From now on, the $k$-metric dimension of $(X,d_t)$ will be denoted by $\dim_k^t(X)$.

\begin{theorem}{\rm \cite{Beardon-RodriguezVelazquez2016}}\label{remarkMonotonyOnT}
Let $(X,d)$ be a metric space, and $k$ a positive integer, and
suppose that $0 < s < t$. Then

\begin{equation}\label{160220}
{\rm dim}^s_k(X) \geq {\rm dim}^t_k(X) \geq {\rm dim}_k(X).
\end{equation}
However, it can happen that
\begin{equation}\label{160128b}
\lim_{t\to +\infty}\ {\rm dim}^t_k(X) > {\rm dim}_k(X).
\end{equation}
\end{theorem}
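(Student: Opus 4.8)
The plan is to deduce the whole chain \eqref{160220} from a single pointwise comparison of truncated metrics, and then to exhibit a metric space witnessing \eqref{160128b}. \emph{Step 1 (a pointwise lemma).} I would first show that whenever $1\le s<t$, with the convention $d_{+\infty}=d$ so that $\dim_k^{+\infty}(X)=\dim_k(X)$, every triple $u,v,w\in X$ satisfies
$$d_s(u,w)\ne d_s(v,w)\ \Longrightarrow\ d_t(u,w)\ne d_t(v,w).$$
Indeed, assume without loss of generality $d_s(u,w)<d_s(v,w)$. Since $d_s(v,w)=\min\{d(v,w),s\}\le s$, this gives $d_s(u,w)<s$, which forces $d_s(u,w)=d(u,w)$ and $d(u,w)<s\le t$; hence $d_t(u,w)=d(u,w)=d_s(u,w)$. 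On the other hand $d_t(v,w)=\min\{d(v,w),t\}\ge\min\{d(v,w),s\}=d_s(v,w)>d_s(u,w)=d_t(u,w)$, so the two values are distinct.

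\emph{Step 2 (the inequalities).} The lemma says precisely that a $k$-tuple of points of a set $S$ that distinguishes $u$ from $v$ under $d_s$ still distinguishes them under $d_t$; hence $\mathcal{R}_k(X,d_s)\subseteq\mathcal{R}_k(X,d_t)$ and therefore $\dim_k^t(X)\le\dim_k^s(X)$. Applying this to the given pair $s<t$ gives $\dim_k^s(X)\ge\dim_k^t(X)$, and applying it to the pair $t<+\infty$ (using $\dim_k^{+\infty}(X)=\dim_k(X)$) gives $\dim_k^t(X)\ge\dim_k(X)$. This is \eqref{160220}.

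\emph{Step 3 (the example for \eqref{160128b}).} I would take $X=\mathbb{R}$ with the Euclidean metric and $k=1$; by the formula $\dim_k(\mathbb{R}^n)=n+k$ recalled in this section, $\dim_1(\mathbb{R})=2<+\infty$. For each finite $t$, however, no finite set $S\subseteq\mathbb{R}$ can be a $1$-metric generator for $(\mathbb{R},d_t)$: writing $m=\min S$, any two reals $x,y>m+t$ satisfy $d_t(x,s)=d_t(y,s)=t$ for all $s\in S$, so $S$ fails to separate $x$ from $y$. Hence $\dim_1^t(\mathbb{R})=+\infty$ for every finite $t$, and $\lim_{t\to+\infty}\dim_1^t(\mathbb{R})=+\infty>2=\dim_1(\mathbb{R})$. (The infinite path on $\mathbb{Z}$ works verbatim, with a $1$-metric basis $\{0,1\}$ giving $\dim_1(\mathbb{Z})=2$.)

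The computations are routine; the only delicate point is Step 1, where one must track which inequality is strict, since all the monotonicity statements follow formally from it. For \eqref{160128b} the single thing to verify is that the chosen space genuinely has finite $k$-metric dimension while each of its truncations does not, which is immediate for any unbounded space of this kind, so I do not expect a real obstacle there.
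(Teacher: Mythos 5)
Your overall structure is sound and essentially matches what the paper does. The inequalities \eqref{160220} are only cited in the paper (from \cite{Beardon-RodriguezVelazquez2016}), and your Step 1 lemma --- that a point separating $u$ from $v$ under $d_s$ still separates them under $d_t$ for $s<t$, including the case $t=+\infty$ --- is the right key observation and is proved correctly; note only that you restrict to $1\le s$ while the statement allows any $0<s<t$, but your argument never actually uses $s\ge 1$. Your witness for \eqref{160128b} is also the paper's: $\dim_k(\mathbb{R}^n)=n+k$ is finite, while every truncation of an unbounded connected space has infinite $1$-metric (hence $k$-metric) dimension, which is exactly the content of Theorem~\ref{Dim^tUnboundedSpaces} combined with Theorem~\ref{firstConsequenceMonotonyG}.

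There is one slip in Step 3: writing $m=\min S$, it is not true that every pair of reals $x,y>m+t$ satisfies $d_t(x,s)=t$ for all $s\in S$. For instance, with $S=\{0,100\}$, $t=50$ and $x=60>m+t$, one has $d_t(x,100)=40<t$, so $S$ may still separate such a pair. You need the two points to lie beyond \emph{every} element of $S$ by at least $t$, i.e.\ $x,y>\max S+t$; this is precisely why the paper's proof of Theorem~\ref{Dim^tUnboundedSpaces} chooses $s_0,s_1\in S$ realizing $\max_{s\in S}d(s,s_0)$ and works outside the ball of radius $t+d(s_0,s_1)$ about $s_0$. Replacing $\min$ by $\max$ makes your argument go through verbatim, so this is a local fix rather than a structural problem.
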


If $X$ has diameter $D(X)\le t$, then the equalities in \eqref{160220} are achieved. Furthermore, as we will show in Theorem \ref{lexiSame_Metric_Adj_dim}, the equalities in  \eqref{160220} can be achieved for some metric spaces of diameter $D(X)> t\ge 2$.  Before giving an example which shows that \eqref{160128b} can
hold, we proceed to state the following result which shows that the
study of $\dim_k(X)$ should be restricted to the case of bounded metric spaces.

\begin{theorem}\label{Dim^tUnboundedSpaces}
For any unbounded connected metric space $(X,d)$ and any $t>0$, $\dim_1^t(X)=+\infty$.
\end{theorem}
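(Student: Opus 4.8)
The plan is to show that an unbounded connected metric space cannot even admit a finite $1$-metric generator for the truncated metric $d_t$. Fix $t>0$ and suppose, for contradiction, that $S=\{s_1,\dots,s_p\}$ is a finite $1$-metric generator for $(X,d_t)$. The key observation is that, under $d_t$, each coordinate map $x\mapsto d_t(x,s_i)$ takes values only in the bounded set $[0,t]$; in fact, once $d(x,s_i)\ge t$ the value is pinned at exactly $t$. So the only information $S$ can distinguish is ``which $s_i$ are within distance $<t$, and at what exact distance.'' I would make this precise by partitioning $X$ according to the set $I(x)=\{i: d(x,s_i)<t\}$ together with the vector $\bigl(d(x,s_i)\bigr)_{i\in I(x)}$; two points with the same data have identical $d_t$-distance vectors to $S$, hence cannot be told apart, contradicting that $S$ is a $1$-metric generator unless each such class is a singleton.

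The heart of the argument is therefore to produce two distinct points $x\ne y$ lying in the same class, and this is where unboundedness plus connectedness enters. First I would note that because $X$ is unbounded, for every $R>0$ there is a point at $d$-distance greater than $R$ from $s_1$; combined with connectedness (so that $X$ is not covered by finitely many balls of radius $<\infty$, or rather so that intermediate distances are realized), I want to find points far from \emph{all} of $S$ simultaneously. Concretely, let $R_0=t+\max_{i,j} d(s_i,s_j)$; any point $x$ with $d(x,s_1)>R_0$ automatically has $d(x,s_i)>t$ for every $i$ by the triangle inequality, so $I(x)=\varnothing$ and its entire $d_t$-vector to $S$ is $(t,t,\dots,t)$. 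It then suffices to exhibit two distinct such points. Here connectedness is the clean tool: in a connected metric space the distance function $d(\cdot,s_1)$ takes all values in an interval $[0,\sup_x d(x,s_1))$, and since $X$ is unbounded this supremum is $+\infty$, so the set $\{x: d(x,s_1)>R_0\}$ is certainly nonempty; moreover a connected space with more than one point is infinite, and in fact the ``sphere-filling'' property of connected metric spaces forces infinitely many points at distance $>R_0$ from $s_1$ — pick any two of them, call them $x$ and $y$.

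Those two points $x\ne y$ satisfy $d_t(x,s_i)=t=d_t(y,s_i)$ for all $i$, so no member of $S$ distinguishes them, contradicting $S\in\mathcal{R}_1(X,d_t)$. Since $S$ was an arbitrary finite set, $\mathcal{R}_1(X,d_t)$ contains no finite set, i.e. $\dim_1^t(X)=+\infty$.

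The step I expect to be the main obstacle is the clean justification that $\{x: d(x,s_1)>R_0\}$ contains at least two points, purely from ``unbounded'' and ``connected'' — one must be a little careful, since an unbounded metric space need not have points at every distance, but a \emph{connected} one does, via the intermediate value property for the continuous map $d(\cdot,s_1)$, whose image is a connected subset of $[0,\infty)$ that is unbounded, hence all of $[0,\infty)$; and any connected metric space with two distinct points has infinitely many, so the preimage of an unbounded interval is infinite. Spelling out exactly why this intermediate-value argument is valid (continuity of $d(\cdot,s_1)$ and connectedness of the image) is the one place where I would slow down and write full detail.
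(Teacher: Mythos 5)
Your proof is correct, and the core observation is the same as the paper's: any point $x$ with $d(x,s_1)>t+\max_{i,j}d(s_i,s_j)$ satisfies $d_t(x,s_i)=t$ for every $s_i\in S$, so two such points would be indistinguishable by $S$. Where you diverge is in how the contradiction is produced. You invoke connectedness: the continuous map $d(\cdot,s_1)$ has a connected, unbounded image in $[0,\infty)$, hence its image contains all of $(R_0,\infty)$, which gives infinitely many --- in particular two --- distinct points outside the ball of radius $R_0$. The paper instead argues the contrapositive without ever using connectedness: if $S$ is a finite $1$-metric generator, then \emph{at most one} point of $X$ can lie outside the ball $B=\{x:\,d(s_0,x)\le t+\max_{s\in S}d(s,s_0)\}$ (since any two points outside it would be indistinguishable), and a space contained in a ball up to one extra point is bounded; hence $\dim_1^t(X)<+\infty$ forces $(X,d)$ to be bounded. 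The paper's route is more economical and actually proves the stronger statement that the conclusion holds for every unbounded metric space, connected or not; yours uses the connectedness hypothesis as given and hinges on the intermediate-value argument you rightly flag as the step needing full detail (it is valid: the image of $d(\cdot,s_1)$ is an unbounded interval containing $0$, and distinct values in $(R_0,\infty)$ are realized by distinct points). One minor remark: the preliminary partition of $X$ by the data $\bigl(I(x),(d(x,s_i))_{i\in I(x)}\bigr)$ is not needed; only the class $I(x)=\varnothing$ enters the final argument.
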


\begin{proof}
We shall show that  $\dim_1^t(X)<+\infty$ implies that $(X,d)$ is bounded.
Assume that a finite set $S\subset X$ is a $1$-metric basis of $(X,d_t)$ and take $s_0,s_1\in S$ such that $d(s_0,s_1)=\displaystyle\max_{s\in S}\{d(s,s_0)\}$. Let $B=\{x\in X:\, d(s_0,x)\le t+d(s_0,s_1)\}$. If there are two different points  $x_1,x_2\in X\setminus B$, then $d_t(x_1,s)=d_t(x_2,s)=t$, for all $s\in S$, which is a contradiction. Hence, either $X\subset B$ or there exists $z\in X$ such that $X\setminus B=\{z\}$, in which case $X\subset \{x\in X:\, d(x,s_0)\le d(z,s_0)\} $. Therefore, $(X,d)$ is bounded.
\end{proof}

We have learned from \cite{Beardon-RodriguezVelazquez2016} that
$\dim_k(\mathbb{R}^n)=n+k$ and, according to Theorems \ref{firstConsequenceMonotonyG} and  \ref{Dim^tUnboundedSpaces}  $\dim_k^t(\mathbb{R}^n)=+\infty$,  which shows that \eqref{160128b} can
hold.

Let  $G=(V,E)$ be a simple and finite  graph. If $G$ is connected, then we consider the function $d:V\times V\rightarrow \mathbb{N}\cup \{0\}$, where $d(x,y)$ is the length of a shortest path between $x$ and $y$ and $\mathbb{N}$ is the set of positive integers. Obviously $(V,d)$ is a metric space, since $d$ is a metric on $V$. From now on, we will use the more intuitive notation $\dim_k^t(G)$ instead of $\dim_k^t(V)$.
In this context, in order to emphasize the role of $t$ we will use the terminology, $(k,t)$-metric generator, $(k,t)$-metric basis and $(k,t)$-metric dimension of $G=(V,E)$, instead of $k$-metric dimension, $k$-metric basis and $k$-metric dimension of $(V,d_{t})$, respectively.
Notice that when using the metric $d_t$
the concept of $k$-metric generator needs not be restricted to the case of connected graphs, as for any pair of vertices $x,y$ belonging to different connected components of $G$ we can assume that $d (x,y)=+\infty$ and so $d_t(x,y)=t$.
Hence, we can consider that the metric dimension of a  non-connected graph $G$ equals the $(k,t)$-metric dimension, where   $t $ is greater than or equal to the maximum diameter among the connected components of $G$.

We would point out the following dimension chain, which is a direct consequence of Theorems \ref{firstConsequenceMonotonyG} and \ref{remarkMonotonyOnT}. For any finite graph $G$ and any  integers $k\ge 1$ and $t\ge 2$,
\begin{equation}\label{InequalityChainktDim}
\dim_k(G)\le \dim_k^{t+1}(G)\le \dim_{k}^{t}(G)\le \dim_{k+1}^{t}(G)-1\le \dim^2_{k+1} (G)-1.
\end{equation}

 Throughout the paper, we will use the notation $K_n$, $K_{r,n-r}$, $C_n$, $N_n$ and $P_n$ for complete graphs, complete bipartite graphs, cycle graphs, empty graphs and path graphs of order $n$, respectively.

We use the notation $u \sim v$ if $u$ and $v$ are adjacent vertices and $G \cong H$ if $G$ and $H$ are isomorphic graphs. For a vertex $v$ of a graph $G$, $N_G(v)$ will denote the set of neighbours or \emph{open neighbourhood} of $v$ in $G$, \emph{i.e.} $N_G(v)=\{u \in V(G):\; u \sim v\}$. If it is clear from the context, we will use the notation $N(v)$ instead of $N_G(v)$. The \emph{closed neighbourhood} of $v$ will be denoted by $N[v]=N(v)\cup \{v\}$. Two vertices $x,y$ are called \emph{twins} if $N(x)=N(y)$ or $N[x]=N[y]$.

For the remainder of the paper, definitions will be introduced whenever a concept is needed.

\section{On $(k,t)$-metric dimensional graphs}\label{Sect(k,t)dimensional}
In this section we discuss a natural problem in the study of the $k$-metric dimension of a metric
space $(X,d_t)$ which consists of finding the largest integer $k$ such that
there exists a $k$-metric generator for $X$.
We say that a  graph $G$ is \emph{$(k,t)$-metric dimensional} if $k$ is the largest integer such that there exists a $(k,t)$-metric basis of $G$. Notice that if $G$ is a $(k,t)$-metric dimensional graph, then for each positive integer $r\le k$, there exists at least one $(r,t)$-metric basis of $G$.

Given a graph $G$ and two different vertices $x,y\in V(G)$, we denote by $\mathcal{D}_{G,t}(x,y)$ the set of vertices that distinguish the pair $x,y$ with regard to the metric $d_t$, \textit{ i.e.},  $$\mathcal{D}_{G,t}(x,y)=\{z\in V:\; d_t(z,x)\ne d_t(z,y)\}.$$
Throughout the article, if the graph $G$ is clear from the content, then we will just write $\mathcal{D}_t(x,y)$.

Note that a set  $S\subseteq V$ is a $(k,t)$-metric generator for $G=(V,E)$ if $|\mathcal{D}_t(x,y)\cap S|\ge k$ for every two different vertices $x,y\in V$. It can also be noted that two different vertices $x,y\in V$ belong to the same twin equivalence class of $G$ if and only if $\mathcal{D}_t(x,y)=\{x,y\}$. By simplicity, if $G$ has diameter $D(G)$ and it is clear from the context that $t\ge D(G)$, then we will use the notation $\mathcal{D}(x,y)$ instead of $\mathcal{D}_t(x,y)$.

Since for every pair of different vertices $x,y\in V$ we have that $|\mathcal{D}_t(x,y)|\ge 2$, it follows that the whole vertex set $V$ is a $(2,t)$-metric generator for $G$ and, as a consequence, we deduce that every graph $G$ is $(k,t)$-metric dimensional for some $k\ge 2$. On the other hand, for any graph $G=(V,E)$ of order $n\ge 3$, there exists at least one vertex $v\in V$ and two vertices $x,y\in V$ such that $\{x,y\}\in N_G(v)$ or $d_t(x,v)=d_t(y,v)=t$, so that $v\notin \mathcal{D}_t(x,y)$ and, as a result,  there is no $n$-metric dimensional graph of order $n\ge 3$. Comments above are emphasized  in the next remark.

\begin{remark}\label{remarkKMetricG}
Let $t$ be a positive integer and let $G$ be a $(k,t)$-metric dimensional graph of order $n\ge 2$. If $n\ge 3$, then $2\le k\le n-1$. Moreover, $G$ is $(n,t)$-metric dimensional if and only if $G\cong K_2$ or $G\cong N_2$.
\end{remark}

We define the following parameter $\mathfrak{d}_t(G)=\displaystyle\min_{x,y\in V}\{|\mathcal{D}_t(x,y)|\}.$ The next general result  was stated in \cite{Estrada-Moreno2014a} for the particular case of $t=2$ and also in \cite{Estrada-Moreno2013} for $t\ge D(G)$.

\begin{theorem}\label{theokdimensionalG}
Any graph $G$ of order $n\ge 2$  is $(\mathfrak{d}_t(G),t)$-metric dimensional and the time complexity of computing $\mathfrak{d}_t(G)$  is $O(n^3)$.
\end{theorem}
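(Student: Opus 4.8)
The plan is to establish the two assertions of Theorem~\ref{theokdimensionalG} separately: first that $G$ is exactly $(\mathfrak{d}_t(G),t)$-metric dimensional, and then that $\mathfrak{d}_t(G)$ can be computed in $O(n^3)$ time.

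\textbf{Showing $G$ is $(\mathfrak{d}_t(G),t)$-metric dimensional.} Write $k=\mathfrak{d}_t(G)=\min_{x,y\in V}|\mathcal{D}_t(x,y)|$. First I would check that $V$ itself is a $(k,t)$-metric generator: for every pair of distinct vertices $x,y$ we have $|\mathcal{D}_t(x,y)\cap V|=|\mathcal{D}_t(x,y)|\ge k$ by the definition of the minimum, so $V\in\mathcal{R}_k(V,d_t)$ and hence $G$ is $(r,t)$-metric dimensional for some $r\ge k$. For the reverse inequality, pick a pair $x_0,y_0$ attaining the minimum, so $|\mathcal{D}_t(x_0,y_0)|=k$. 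If $S$ were a $(k+1,t)$-metric generator, then we would need $|\mathcal{D}_t(x_0,y_0)\cap S|\ge k+1$, which is impossible since $|\mathcal{D}_t(x_0,y_0)|=k$. Therefore no $(k+1,t)$-metric generator exists, so $k$ is the largest integer for which a $(k,t)$-metric generator exists, i.e.\ $G$ is $(k,t)$-metric dimensional. This half is essentially bookkeeping with the definitions already set up in the section.

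\textbf{The complexity bound.} The quantity to compute is $\min_{x,y}|\{z\in V:\ d_t(z,x)\ne d_t(z,y)\}|$. The natural approach is: first compute the full distance matrix $(d(x,y))_{x,y\in V}$ of $G$ — using BFS from each vertex this costs $O(n(n+m))=O(n^3)$, or one may invoke an all-pairs shortest path routine within the same bound — and then truncate each entry to $\min\{d(x,y),t\}$ to obtain the matrix of $d_t$. With this matrix available, for each of the $\binom{n}{2}=O(n^2)$ pairs $x,y$ one scans all $n$ vertices $z$ and counts how many satisfy $d_t(z,x)\ne d_t(z,y)$; this is $O(n)$ per pair, hence $O(n^3)$ over all pairs. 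Taking the minimum of these counts gives $\mathfrak{d}_t(G)$, and the total running time is $O(n^3)$.

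\textbf{Main obstacle.} There is no deep obstacle here; the content of the theorem is the clean identification of the dimensional threshold with $\mathfrak{d}_t(G)$, which follows immediately once one observes that a $(k,t)$-metric generator is exactly a set hitting every $\mathcal{D}_t(x,y)$ in at least $k$ points, so that the worst pair (the one with the smallest $\mathcal{D}_t$) is the bottleneck. The only point requiring a modicum of care is the complexity claim for non-connected $G$: one should note that the convention $d(x,y)=+\infty$ for vertices in different components yields $d_t(x,y)=t$, so the truncated distance matrix is still well defined and computable componentwise in $O(n^3)$, and the counting argument is unchanged. I would state this briefly to cover the non-connected case promised in the introduction.
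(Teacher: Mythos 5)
Your proposal is correct and follows essentially the same route as the paper: the minimizing pair $x_0,y_0$ with $|\mathcal{D}_t(x_0,y_0)|=\mathfrak{d}_t(G)$ is the bottleneck ruling out any $(\mathfrak{d}_t(G)+1,t)$-metric generator, and $\mathfrak{d}_t(G)$ is computed from an all-pairs distance matrix plus an $O(n)$ count per pair, for $O(n^3)$ total. The only cosmetic difference is in the existence direction, where you observe directly that $V$ itself is a $(\mathfrak{d}_t(G),t)$-metric generator, whereas the paper argues by contradiction, augmenting a $(k,t)$-metric basis to a $(k+1,t)$-metric generator whenever $k<\mathfrak{d}_t(G)$; your version is, if anything, slightly cleaner.
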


\begin{proof}
If $G=(V,E)$ is a $(k,t)$-metric dimensional graph, then for any $(k,t)$-metric basis $B$ and any pair of different vertices $x,y\in V$, we have
 $|B\cap {\cal D}_t(x,y)|\ge k$. Thus, $k\le \mathfrak{d}_t(G)$. Now we suppose that $k<\mathfrak{d}_t(G)$. In such a case, for every $x_i,x_j\in V$ such that $|B\cap {\cal D}_t(x_i,x_j)|=k$, the set  ${\cal D}_t(x_i,x_j)\setminus B$ must not be empty, so that  the set  $B\cup \{z\in {\cal D}_t(x_i,x_j)\setminus B:\,  |B\cap {\cal D}_t(x_i,x_j)|=k\}$
  is a $(k+1,t)$-metric generator for $G$, which is a contradiction. Therefore, $k=\mathfrak{d}_t(G).$


We now proceed to prove that  the time complexity of computing $\mathfrak{d}_t(G)$  is $O(n^3)$. We assume that the graph $G$ is represented by its adjacency matrix ${\bf A_G}$.
Hence, the problem is reduced to finding the value of $\mathfrak{d}_t(G)$. To this end, we can initially compute the distance matrix ${\bf D_G}$ from the matrix ${\bf A_G}$ by using the well-known Floyd-Warshall algorithm \cite{Roy1959, Warshall1962}, which has time complexity $O(n^3)$. The distance matrix ${\bf D_G}$ is symmetric of order $n\times n$ whose rows and columns are labelled by vertices, with entries between $0$ and $n-1$ (or $+\infty$ if $G$ is not connected). Now observe that for every $x,y\in V(G)$ we have that $z\in\mathcal{D}_t(x,y)$ if and only if $\min\{{\bf D_G}(x,z),t\}\ne \min\{{\bf D_G}(x,z),t\}$.

Given the distance matrix of $G$, computing how many vertices belong to $\mathcal{D}_t(x,y)$ for each of the $\binom{n}{2}$ pairs $x,y\in V$ can be checked in linear time. Therefore, the overall running time of such a process is bounded by the cubic time of the Floyd-Warshall algorithm.
\end{proof}

As Theorem \ref{theokdimensionalG} shows, in general, the problem of computing
$\mathfrak{d}_t(G)$ is very easy to solve. Even so,  it would be desirable to obtain some general results on this subject. In this section we restrict ourself to   discuss the extreme cases $\mathfrak{d}_t(G)=2$ and $\mathfrak{d}_t(G)=n-1$, and to study the parameter $\mathfrak{d}_t(G)$ for the particular case of paths and cycles.

If two vertices $u,v$ of $G$   belong to the same twin equivalence class, then  $\mathcal{D}_t(u,v)=\{u,v\}$, and as a consequence, we deduce the following result.

\begin{corollary}\label{coro2DimensionalG}
A graph $G$ is $(2,t)$-metric dimensional if and only if $t=1$ or there are at least two vertices of $G$ belonging to the same twin equivalence class.
\end{corollary}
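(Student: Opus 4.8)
The plan is to reduce the statement to a property of the parameter $\mathfrak{d}_t(G)$ and then analyse the pairs of vertices realising its minimum value. By Theorem~\ref{theokdimensionalG}, $G$ is $(2,t)$-metric dimensional if and only if $\mathfrak{d}_t(G)=2$; and since $x,y\in\mathcal{D}_t(x,y)$ for every pair of distinct vertices $x,y$ (because $d_t(x,x)=0<d_t(x,y)$, as $t$ is a positive integer and $d(x,y)\ge 1$), we always have $|\mathcal{D}_t(x,y)|\ge 2$. Hence $\mathfrak{d}_t(G)=2$ if and only if there exists a pair of distinct vertices $x,y\in V$ with $\mathcal{D}_t(x,y)=\{x,y\}$, i.e.\ no third vertex distinguishes $x$ from $y$ with respect to $d_t$. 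The whole argument then consists in deciding exactly when such a pair can occur.

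For the ``if'' direction I would argue directly. If $t=1$, then for every pair of distinct vertices $x,y$ and every vertex $z\notin\{x,y\}$ we have $d_1(z,x)=\min\{d(z,x),1\}=1=\min\{d(z,y),1\}=d_1(z,y)$, so $\mathcal{D}_1(x,y)=\{x,y\}$ and therefore $\mathfrak{d}_1(G)=2$. If instead $G$ has two vertices $u,v$ lying in a common twin equivalence class, then, as recalled just before the statement, $\mathcal{D}_t(u,v)=\{u,v\}$, so again $\mathfrak{d}_t(G)=2$. In both situations $G$ is $(2,t)$-metric dimensional by Theorem~\ref{theokdimensionalG}.

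For the ``only if'' direction, assume $G$ is $(2,t)$-metric dimensional with $t\ge 2$, and let $x,y$ be a pair of distinct vertices with $\mathcal{D}_t(x,y)=\{x,y\}$; I would show that $x$ and $y$ are twins, which produces the required pair of vertices in a common twin equivalence class. The key observation is that, because $t\ge 2$, any vertex $w\notin\{x,y\}$ with $w\sim x$ satisfies $d_t(w,x)=\min\{1,t\}=1$, and since $w\notin\mathcal{D}_t(x,y)$ this forces $\min\{d(w,y),t\}=1$, which (as $t\ge 2$) is possible only if $d(w,y)=1$, i.e.\ $w\sim y$. Splitting into cases: if $x\not\sim y$, the above gives $N(x)\subseteq N(y)$, and by the symmetric argument $N(x)=N(y)$, so $x,y$ are twins; if $x\sim y$, the same reasoning applied to vertices $w\in N(x)\setminus\{y\}$ gives $N(x)\setminus\{y\}\subseteq N(y)\setminus\{x\}$, and by symmetry equality, whence $N[x]=N[y]$ and again $x,y$ are twins. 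Combining the two directions yields the claimed equivalence.

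The argument is essentially routine, and I do not expect a genuine obstacle. The only points needing care are: making sure the hypothesis $t\ge 2$ is actually used in the ``only if'' direction (it is precisely what excludes $d_t(w,y)=1$ arising from some $d(w,y)>1$), and handling the two cases $x\sim y$ and $x\not\sim y$ separately, since the notion of twin splits into the conditions $N(x)=N(y)$ and $N[x]=N[y]$.
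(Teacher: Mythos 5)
Your proof is correct and follows essentially the same route as the paper: reduce via Theorem~\ref{theokdimensionalG} to the condition $\mathfrak{d}_t(G)=2$, i.e.\ the existence of a pair with $\mathcal{D}_t(x,y)=\{x,y\}$, and identify such pairs with twins when $t\ge 2$. The paper leaves the converse direction (that $\mathcal{D}_t(x,y)=\{x,y\}$ forces $x,y$ to be twins for $t\ge 2$) as an unproved remark, and your case analysis on $x\sim y$ versus $x\not\sim y$ correctly supplies exactly that missing detail.
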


An example of a $(2,t)$-metric dimensional graph is the star  $K_{1,n-1}$, whose $(2,t)$-metric dimension is $\dim_{2}^t(K_{1,n-1})=n-1$ for any $t\ge 2$, while examples of trees which are not $(2,t)$-metric dimensional are the paths $P_n$ for $n\ge 4$ and $t\ge 2$, as we will show in Proposition \ref{propKPathG}.

\begin{proposition}\label{propKPathG}
Let  $n\ge 3$ and $t$ be two integers. Then the following statements hold.
\begin{enumerate}[{\rm (i)}]
\item If $2\le t \le n-2$, then $P_n$ is $(t+1,t)$-metric dimensional.
\item  If $ t \ge n-2$, then $P_n$ is $(n-1,t)$-metric dimensional.
\end{enumerate}
\end{proposition}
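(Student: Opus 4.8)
The plan is to invoke Theorem~\ref{theokdimensionalG}, which asserts that $P_n$ is $(\mathfrak{d}_t(P_n),t)$-metric dimensional; hence it suffices to compute $\mathfrak{d}_t(P_n)=\min_{x,y\in V(P_n)}|\mathcal{D}_t(x,y)|$. Label $V(P_n)=\{v_1,\dots,v_n\}$ so that $v_i\sim v_{i+1}$ and $d(v_i,v_j)=|i-j|$. Since $|\mathcal{D}_t(v_i,v_j)|=n-|V(P_n)\setminus\mathcal{D}_t(v_i,v_j)|$, the problem reduces to computing $M:=\max_{1\le i<j\le n}|V(P_n)\setminus\mathcal{D}_t(v_i,v_j)|$, and I will prove that $M=\max\{n-t-1,\,1\}$; this gives $\mathfrak{d}_t(P_n)=\min\{t+1,\,n-1\}$, which yields both parts of the statement at once, since for $2\le t\le n-2$ one has $\min\{t+1,n-1\}=t+1$, for $t\ge n-1$ one has $\min\{t+1,n-1\}=n-1$, and the two ranges overlap consistently at $t=n-2$, where $t+1=n-1$.

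The first step is to write down an explicit formula for the number of vertices that fail to distinguish a given pair $v_i,v_j$ with $i<j$. Checking the equality $\min\{|\ell-i|,t\}=\min\{|\ell-j|,t\}$ according to the position of $v_\ell$, one sees that $v_\ell\notin\mathcal{D}_t(v_i,v_j)$ precisely when $\ell\le i-t$, or $\ell\ge j+t$, or $i<\ell<j$ with $\ell-i=j-\ell$, or $i<\ell<j$ with $\min\{\ell-i,\,j-\ell\}\ge t$. Counting the vertices satisfying these conditions yields
\[
|V(P_n)\setminus\mathcal{D}_t(v_i,v_j)|=\max\{0,\,i-t\}+\max\{0,\,n-j-t+1\}+m(j-i),
\]
where $m(\delta)=\delta-2t+1$ if $\delta\ge 2t$, $m(\delta)=1$ if $\delta<2t$ is even, and $m(\delta)=0$ if $\delta<2t$ is odd; the only subtlety is to notice that the ``midpoint'' vertex ($\ell-i=j-\ell$) already lies in the range $\min\{\ell-i,j-\ell\}\ge t$ exactly when $\delta\ge 2t$.

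For the lower bound $M\ge\max\{n-t-1,1\}$ it suffices to evaluate the formula on two pairs: the end pair $(v_1,v_2)$, whose difference $j-i=1$ is odd, contributes $\max\{0,\,n-t-1\}$, while $(v_1,v_3)$ contributes $\max\{0,\,n-t-2\}+1\ge 1$. The upper bound $|V(P_n)\setminus\mathcal{D}_t(v_i,v_j)|\le\max\{n-t-1,1\}$ for every pair is where the real work lies, and I expect the bookkeeping in this case analysis to be the main obstacle. I would split according to whether each of the two ``tail'' terms $\max\{0,i-t\}$ and $\max\{0,n-j-t+1\}$ is positive (that is, whether $i\ge t+1$ and whether $j\le n-t$), and, inside each of the four cases, according to whether $\delta=j-i$ satisfies $\delta\ge 2t$, $\delta<2t$ with $\delta$ even, or $\delta<2t$ with $\delta$ odd. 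The recurring ingredients are: $\delta\le n-1$ always; a positive left tail forces $i\ge t+1$ and hence $i\le n-\delta$ (because $j=i+\delta\le n$); symmetrically a positive right tail forces $j\le n-t$; and if both tails vanish and $t\ge n-1$, then automatically $\delta\le n-1\le t<2t$, so only the middle term $m(\delta)\le 1$ survives. Substituting these bounds into the formula, each of the (at most twelve) sub-cases collapses to an elementary inequality equivalent to $t\ge 1$, together with $n-t-1\ge 1$ in the regime $t\le n-2$; this yields the upper bound $n-t-1$ when $t\le n-2$ and the upper bound $1$ when $t\ge n-1$. Combined with the lower bound, this gives $M=\max\{n-t-1,1\}$, hence $\mathfrak{d}_t(P_n)=\min\{t+1,n-1\}$, and Theorem~\ref{theokdimensionalG} then delivers the Proposition.
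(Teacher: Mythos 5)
Your proposal is correct, and it reaches the result by the same top-level reduction as the paper (apply Theorem~\ref{theokdimensionalG} and compute $\mathfrak{d}_t(P_n)$), but the computation itself is genuinely different. The paper bounds $|\mathcal{D}_t(u_l,u_r)|$ from \emph{below} directly, splitting on whether $r-l\ge t$ or $r-l\le t-1$ and, in the latter case, estimating $|B_l\cup B_r|-1$ for balls of radius $t-1$; it also treats the regimes $t\le n-2$ and $t\ge n-2$ as two separate arguments. You instead compute the complement $|V(P_n)\setminus\mathcal{D}_t(v_i,v_j)|$ \emph{exactly} as $\max\{0,i-t\}+\max\{0,n-j-t+1\}+m(j-i)$ and maximize it; I checked your formula (the two tails, the midpoint, and the ``both distances truncated'' middle block, with the midpoint absorbed into the block exactly when $\delta\ge 2t$) and it is right. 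This buys you a single unified statement $\mathfrak{d}_t(P_n)=\min\{t+1,n-1\}$ covering both parts at once, including the overlap at $t=n-2$ and even $t=1$, at the cost of a somewhat longer case analysis for the upper bound. That case analysis is only sketched in your write-up, but the sketch is sound: I verified that in each sub-case (tails positive or zero, $\delta\ge 2t$ versus $\delta<2t$ with the parity of $\delta$ taken into account so that $m(\delta)\le 1$ is replaced by $m(\delta)=0$ when $\delta$ is odd) the required inequality does reduce to $t\ge 1$, so the claimed collapse is real and not wishful. If you write it out, do carry the parity distinction through the ``both tails positive, $\delta<2t$'' sub-case, since using only the crude bound $m(\delta)\le 1$ there leaves you with $n-2t+1\le n-t-1$, which needs $t\ge 2$ rather than $t\ge 1$.
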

\begin{proof}
Since $n\ge 3$,   Remark \ref{remarkKMetricG} leads to $\mathfrak{d}_t( P_n)\in \{2,\ldots, n-1\}$.
Let $\{u_1,u_2,\dots , u_n\}$ be the set of vertices of $P_n$ where $u_i\sim u_{i+1}$, for all $i\in \{1,\dots , n-1\}.$

We now consider two cases:
\begin{enumerate}[(i)]
\item Assume that $2\le t \le n-2$.  Since $t\ge 2$, it follows that $n\ge 4$.  Since ${\cal D}_{t}(u_1,u_2)=\{u_1,\ldots , u_{t+1}\}$, we have $\mathfrak{d}_t(P_n)\le t+1$. Let $l,r\in \{1,\ldots , n\}$ be a pair of integers  different  from the pairs $1,2$ and ${n-1}, n$ such that $l<r$. We first assume that $r-l\ge t$. If $r-l\in \{t,t+1\}$, then $|{\cal D}_{t}(u_l,u_r)|\ge r-l+1\ge t+1$.     Now, if  $r-l\ge t+2$, then $|{\cal D}_{t}(u_l,u_r)|\ge r-l\ge t+1$.
We now assume that $r-l\le  t-1$. For a given vertex $u_i$ we define the ball of center $u_i$ and radius $t-1$ as $B_i=\{u_j: d(u_i,u_j)\le t-1\}$. Notice that for any vertex $u_i$, $|B_i|\ge t$ and the equality holds if and only if $i\in \{1,n\}$.
Now, since $n\ge t+2$ and  $r-l\le  t-1$, we can claim that $|B_l|\ge t+1$ or $|B_r|\ge t+1$. Hence, if $l\ne 1$ and $r\ne n$, then $|{\cal D}_{t}(u_l,u_r)|\ge |B_l\cup B_r|-1\ge 2(t+1)-(r-l+1)-1\ge t+1$. On the other side, if $l=1$, then $r\ge 3$ and so $|{\cal D}_{t}(u_l,u_r)|\ge |B_l\cup B_r|-1=|\{u_1,u_2,\ldots , u_{r+t-1}\}|-1=r+t-2\ge  t+1$. The case $r=n$ is analogous to the previous one.
Therefore,    $\mathfrak{d}_t( P_n)=t+1$.

\item Let $ t \ge n-2$. For any pair of different vertices  there exists at most one vertex  which is not able to distinguish them. Therefore, in this case
 $\mathfrak{d}_t( P_n)=n-1$.
\end{enumerate}
\end{proof}

\begin{proposition}\label{propKClyceG}
Let  $n\ge 3$ and $t$ be two integers. Then the following statements hold.
\begin{enumerate}[{\rm (i)}]
\item If $n$ is odd and $2\le t\le\dfrac{n-1}{2}$ or $n$ is even and $2\le t\le\dfrac{n-2}{2}$, then $C_n$ is $(2t,t)$-metric dimensional.
\item If $n$ is odd and $t\ge\dfrac{n-1}{2}$, then $C_n$ is $(n-1,t)$-metric dimensional.
\item If $n$ is even and $t\ge\dfrac{n-2}{2}$, then $C_n$ is $(n-2,t)$-metric dimensional.
\end{enumerate}
\end{proposition}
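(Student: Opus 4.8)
The plan is to invoke Theorem~\ref{theokdimensionalG}, so that it suffices to compute $\mathfrak{d}_t(C_n)=\min_{x\neq y}|\mathcal{D}_t(x,y)|$ and show it equals $2t$, $n-1$ or $n-2$ according to the three cases. Write the vertices as $u_0,u_1,\dots,u_{n-1}$ cyclically, put $D=D(C_n)=\lfloor n/2\rfloor$, and for $r\ge 0$ let $N_r[u]=\{v:\ d(u,v)\le r\}$. Since $C_n$ is vertex-transitive and admits the reflection $u_i\mapsto u_{-i}$, every pair of distinct vertices is equivalent to $\{u_0,u_j\}$ for some $1\le j\le D$. Two elementary facts drive everything. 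First, since $\min\{a,t\}\neq\min\{b,t\}$ holds exactly when $a\neq b$ and $\min\{a,b\}\le t-1$, we have
\[
\mathcal{D}_t(u_0,u_j)=\bigl(N_{t-1}[u_0]\cup N_{t-1}[u_j]\bigr)\setminus E(u_0,u_j),\qquad E(u_0,u_j)=\{v:\ d(v,u_0)=d(v,u_j)\}.
\]
Second, by the ``bisector'' picture on a cycle, $E(u_0,u_j)$ consists of the midpoint of the arc of length $j$ (a vertex iff $j$ is even) together with the midpoint of the arc of length $n-j$ (a vertex iff $n-j$ is even); hence $|E(u_0,u_j)|=1$ for all pairs when $n$ is odd, while $|E(u_0,u_j)|\in\{0,2\}$ when $n$ is even, the value $2$ occurring precisely when $j$ is even.

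For (ii), and for (iii) when $t\ge D$: here $d\le D\le t$, so $d_t=d$ and $\mathcal{D}_t(x,y)=V\setminus E(x,y)$, whence $\mathfrak{d}_t(C_n)=n-\max_{x\neq y}|E(x,y)|$; by the description of $E$ this is $n-1$ for $n$ odd and $n-2$ for $n$ even. This proves (ii) outright, and (iii) except for the single extra value $t=D-1$ (with $n$ even). For that value I would note that $N_{D-2}[u_0]$ omits exactly the three consecutive vertices lying at distances $D-1,D,D-1$ from $u_0$; computing $|N_{D-2}[u_0]\cup N_{D-2}[u_j]|$ from the overlap of these two triples of vertices and then subtracting $|E(u_0,u_j)\cap(N_{D-2}[u_0]\cup N_{D-2}[u_j])|$, a short check on the parity of $j$ and on whether $j\in\{1,2\}$ or $j\ge 3$ shows $|\mathcal{D}_t(u_0,u_j)|\ge n-2$ for all $j$, with equality at $j=1$.

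For (i) the upper bound is the adjacent pair: when $t\le D$ (for $n$ odd) or $t\le D-1$ (for $n$ even), the set $N_{t-1}[u_0]\cup N_{t-1}[u_1]$ is the arc of the $2t$ distinct vertices in positions $-(t-1),\dots,t$, and the only possible equidistant vertex (present only when $n$ is odd, and then at distance $D$ from both) falls outside this arc, so $|\mathcal{D}_t(u_0,u_1)|=2t$. For the matching lower bound I would prove $|\mathcal{D}_t(u_0,u_j)|\ge 2t$ for every $j$. If $j\le n-2t+1$, the $2t$ vertices $u_0,u_{-1},\dots,u_{-(t-1)}$ together with $u_j,u_{j+1},\dots,u_{j+t-1}$ are pairwise distinct and each one distinguishes $u_0$ from $u_j$ (a failure would force one of them to be equidistant, which needs $n-j\le 2t-2$, contrary to $j\le n-2t+1$). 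If $j\ge n-2t+2$, then $n-j\le 2t-2$ and the balls $N_{t-1}[u_0],N_{t-1}[u_j]$ overlap on the long arc; when moreover $j\le 2t-2$ they also overlap on the short arc, so their union is all of $V$ and $|\mathcal{D}_t(u_0,u_j)|\ge n-|E(u_0,u_j)|\ge n-2\ge 2t$, while when $j\ge 2t-1$ the union has $n-j+2t-1$ vertices, so $|\mathcal{D}_t(u_0,u_j)|\ge (n-j+2t-1)-2\ge 2t$ because $j\le D\le n-3$. Finally, the boundary value $t=D$ with $n$ odd is covered by the argument of (ii) (there $n-1=2D=2t$), and the small orders $n\le 6$ are verified by inspection.

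The main obstacle is simply the bookkeeping when the balls $N_{t-1}[\cdot]$ are large enough to wrap around the cycle, that is, for near-antipodal pairs in (i) and for the borderline value $t=D-1$ in (iii): these are the cases where counting $|N_{t-1}[u_0]\cup N_{t-1}[u_j]|$ and its intersection with $E(u_0,u_j)$ requires the case split above and a few low-order checks, none of them conceptually hard but all needing care.
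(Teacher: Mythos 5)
Your proposal is correct and follows the same overall strategy as the paper: reduce to computing $\mathfrak{d}_t(C_n)$ via Theorem~\ref{theokdimensionalG}, get the upper bound from the adjacent pair, and then verify the matching lower bound over all pairs. Where you differ is in how the lower bound is executed. The paper exhibits, for each pair $u_l,u_r$, an explicit set of $2t$ vertices claimed to lie in $\mathcal{D}_t(u_l,u_r)$ (two arcs of $t$ vertices hanging off $u_l$ and $u_r$ on the outside of the short arc), whereas you first establish the identity $\mathcal{D}_t(x,y)=\bigl(N_{t-1}[x]\cup N_{t-1}[y]\bigr)\setminus E(x,y)$ together with the precise description of the bisector $E$, and then count, splitting on whether $j\le n-2t+1$ or not. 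Your extra case split is not mere pedantry: the paper's one-line containment for $r-l<\frac{n-1}{2}$ actually fails when the two arcs of $t$ vertices wrap around and meet (e.g.\ $n=13$, $t=5$, $r-l=5$: the two arcs share the vertex $u_{l-(t-1)}=u_{r+(t-1)}$, which is moreover equidistant from $u_l$ and $u_r$, so the listed set has only $2t-1$ elements and is not contained in $\mathcal{D}_t(u_l,u_r)$). The conclusion $|\mathcal{D}_t(u_l,u_r)|\ge 2t$ is still true there, and your regime $j\ge n-2t+2$, where the balls overlap and the union is counted directly, is exactly what repairs this. The cost of your route is the bookkeeping you acknowledge (the borderline values $t=D$ for $n$ odd and $t=D-1$ for $n$ even, and small $n$), all of which you dispatch correctly; the benefit is a cleaner and demonstrably airtight count.
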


\begin{proof}
Since $n\ge 3$,  Remark \ref{remarkKMetricG} leads to   $2\le \mathfrak{d}_t( C_n)\le n-1$. Let $V=\{u_0,u_2,\ldots,u_{n-1}\}$ be the vertex set of the cycle $C_n$, where $u_i\sim u_{i+1}$ and the subscripts of $u_i\in V$ are taken modulo $n$.  We now consider four cases:
\begin{enumerate}[{\rm (i)}]
\item Assume that $n$ is odd and $2\le t\le\dfrac{n-1}{2}$. Since $t\ge 2$, we have that  $n\ge 5$, and from  $\mathcal{D}_t(u_i,u_{i+1})=\{u_{i-(t-1)},\ldots,u_{i+t}\}$ we deduce that $\mathfrak{d}(C_n)\le 2t$. Let $l,r\in\{0,\ldots,n-1\}$ be two integers such that $l<r$ and $r-l\le\dfrac{n-1}{2}$. If $r-l<\dfrac{n-1}{2}$, then $\{u_{l-(t-1)},\ldots,u_l\}\cup \{u_r,\ldots,u_{r+(t-1)}\}\subseteq\mathcal{D}_t(u_l,u_r)$, and as a consequence, $|\mathcal{D}_t(u_l,u_r)|\ge 2t$. If $r-l=\dfrac{n-1}{2}$, then $\{u_{l-(t-2)},\ldots,u_l,u_{l+1}\}\cup \{u_{r-1},u_r,\ldots,u_{r+(t-2)}\}\subseteq\mathcal{D}_t(u_l,u_r)$, and thus, $|\mathcal{D}_t(u_l,u_r)|\ge 2t$ again.  Therefore, $\mathfrak{d}_t( C_n)=2t$. The case $n$ is even and $2\le t\le\dfrac{n-2}{2}$ is completely analogous to the previous one.
\item $n$ is odd and $t\ge\dfrac{n-1}{2}$. For any pair of different vertices there exists exactly one vertex which is not
able to distinguish them. Therefore, $\mathfrak{d}_t( C_n)=n-1$.
\item $n$ is even and $t\ge\dfrac{n-2}{2}$. For any pair of vertices $u_i,u_j\in V$, such that $d(u_i,u_j)=2l$, we can take a vertex $u_r$ such that $d(u_i,u_r)=d(u_j,u_r)=l$. So,  ${\cal D}_t(u_i,u_j)=V\setminus \{u_r,u_{r+\frac{n}{2}}\}$. On the other hand, if $d(u_i,u_j)$ is odd, then $|{\cal D}_t(u_i,u_j)|\ge n-2$.  Therefore, $\mathfrak{d}_t( C_n)=n-2$.
\end{enumerate}
\end{proof}

Once presented the two propositions above, we are now ready to present the characterization of $(n-1,t)$-metric dimensional graphs.

\begin{theorem}\label{n-1_Dimensional}
 A graph $G$ of order $n\ge 3$  is $(n-1,t)$-metric dimensional if and only if $G\cong P_n$ for $n\le t+2$, or $G\cong C_n$ for  an odd integer $n\le 2t+1$, or $G\cong K_1\cup K_2$, or $G\cong N_3$.
\end{theorem}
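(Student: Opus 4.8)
The plan is to translate the statement into a condition on the parameter $\mathfrak{d}_t(G)$ and then argue structurally. By Theorem~\ref{theokdimensionalG}, $G$ is $(\mathfrak{d}_t(G),t)$-metric dimensional, and by Remark~\ref{remarkKMetricG} one has $\mathfrak{d}_t(G)\le n-1$ when $n\ge 3$; hence $G$ is $(n-1,t)$-metric dimensional if and only if $\mathfrak{d}_t(G)=n-1$, that is, if and only if every pair of distinct vertices $x,y$ admits \emph{at most one} vertex $z\notin\{x,y\}$ with $d_t(z,x)=d_t(z,y)$ (recall $x$ and $y$ always distinguish the pair, so $|\mathcal{D}_t(x,y)|\ge 2$). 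I would dispose of $n=3$ at once: the four graphs of order $3$ are $N_3$, $K_1\cup K_2$, $P_3$ and $K_3\cong C_3$; each has two twin vertices, so by Corollary~\ref{coro2DimensionalG} each is $(2,t)$-metric dimensional, which equals $(n-1,t)$-metric dimensional, and all four occur in the list since $P_3$ satisfies $n\le t+2$ and $C_3$ satisfies $n\le 2t+1$ for every positive integer $t$. For $n\ge 4$ I may moreover assume $t\ge 2$, since $d_1\equiv 1$ off the diagonal forces $\mathcal{D}_1(x,y)=\{x,y\}$ for all pairs and hence $\mathfrak{d}_1(G)=2<n-1$.

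For the sufficiency direction with $n\ge 4$ there is nothing new: if $G\cong P_n$ with $n\le t+2$, i.e.\ $t\ge n-2$, then Proposition~\ref{propKPathG}(ii) gives $\mathfrak{d}_t(P_n)=n-1$; and if $G\cong C_n$ with $n$ odd and $n\le 2t+1$, i.e.\ $t\ge\frac{n-1}{2}$, then Proposition~\ref{propKClyceG}(ii) gives $\mathfrak{d}_t(C_n)=n-1$.

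The content is the necessity direction for $n\ge 4$, where I assume $\mathfrak{d}_t(G)=n-1$. \emph{Step 1: $G$ is connected.} Since $\mathfrak{d}_t(N_n)=2\ne n-1$, the graph $G$ has an edge $xy$; every vertex outside the component $C$ containing $x,y$ fails to distinguish $\{x,y\}$ (it is at $d_t$-distance $t$ from both), so $|C|\ge n-1$, forcing $G=C\cup K_1$ with $|C|=n-1\ge 3$; but then the isolated vertex fails to distinguish every edge of $C$, while the connected graph $C$ on at least three vertices has a vertex of degree $\ge 2$, which is a second vertex failing to distinguish the pair of its two neighbours --- a contradiction. \emph{Step 2 (the main obstacle): $\Delta(G)\le 2$.} Suppose $v$ has three neighbours $x_1,x_2,x_3$ and examine the induced subgraph on $\{x_1,x_2,x_3\}$. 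If it has no edge, then $v$ and $x_3$ both fail on $\{x_1,x_2\}$, since $d_t(x_3,x_1)=d_t(x_3,x_2)=\min\{2,t\}=2$. Otherwise relabel so that $x_1\sim x_2$: if in addition $x_3\sim x_1$ and $x_3\sim x_2$, then $v$ and $x_2$ are two common neighbours, hence two non-distinguishers, of $\{x_1,x_3\}$; if $x_3$ is adjacent to exactly one of $x_1,x_2$, say $x_3\sim x_1$, then $v$ and $x_1$ both fail on $\{x_2,x_3\}$; and if $x_3$ is adjacent to neither, then $v$ and $x_3$ both fail on $\{x_1,x_2\}$, again because $d_t(x_3,x_1)=d_t(x_3,x_2)=2$. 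In every case we contradict $\mathfrak{d}_t(G)=n-1$, so $\Delta(G)\le 2$; being connected on $n\ge 4$ vertices, $G$ is therefore $P_n$ or $C_n$.

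Finally I would read off the admissible parameters from Propositions~\ref{propKPathG} and~\ref{propKClyceG} (using $t\ge 2$). For $P_n$ these propositions give $\mathfrak{d}_t(P_n)=n-1$ exactly when $t\ge n-2$, i.e.\ $n\le t+2$, and $\mathfrak{d}_t(P_n)=t+1<n-1$ otherwise. For $C_n$ with $n$ even, $\mathfrak{d}_t(C_n)$ equals $2t$ or $n-2$ according as $t\le\frac{n-2}{2}$ or $t\ge\frac{n-2}{2}$, hence is never $n-1$, so even cycles are excluded. For $C_n$ with $n$ odd and $n\ge 5$, $\mathfrak{d}_t(C_n)=2t<n-1$ when $t<\frac{n-1}{2}$ and $\mathfrak{d}_t(C_n)=n-1$ when $t\ge\frac{n-1}{2}$, i.e.\ exactly when $n\le 2t+1$. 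Collecting this with the $n=3$ analysis yields precisely the four families in the statement. I expect Step~2 to be the laborious part, although each of its cases reduces to a one-line computation of $d_t$ on a small neighbourhood.
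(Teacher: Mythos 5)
Your proposal is correct and follows essentially the same route as the paper: sufficiency is read off from Propositions~\ref{propKPathG} and~\ref{propKClyceG}, and necessity rests on the same four-case analysis of a degree-three vertex (your textual cases are exactly the configurations of Figure~\ref{figCases}) to force $\Delta(G)\le 2$, after which paths and cycles are filtered by the propositions. The only difference is organizational --- you dispose of $n=3$ up front and prove connectivity directly for $n\ge 4$, where the paper instead classifies the disconnected graphs at the end to recover $K_1\cup K_2$ and $N_3$ --- and both versions are sound.
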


\begin{proof}
 Since $n\ge 3$,  Remark \ref{remarkKMetricG} leads to   $\mathfrak{d}_t(G)\in \{2,\ldots, n-1\}$.
If $G$ is a path of order $n\le t+2$, then by Proposition \ref{propKPathG} we have that $G$ is $(n-1,t)$-metric dimensional. If $G$ is a cycle of odd order $n\le 2t+1$, then by Proposition \ref{propKClyceG} it follows that $G$ is $(n-1,t)$-metric dimensional. If $G\cong K_1\cup K_2$ or $G\cong N_3$, then it is straightforward to see that $G$ is $(n-1,t)$-metric dimensional.

On the other side, let $G$ be a graph such that $\mathfrak{d}_t(G)=n-1$.
 Hence, for every pair of different vertices $x,y\in V(G)$ there exists at most one vertex which does not distinguish $x,y$. Suppose $G$ has maximum degree $\Delta(G)>2$ and let $v\in V(G)$ such that $\{u_1,u_2,u_3\}\subseteq N(v)$. Figure \ref{figCases} shows all the possibilities for the links between these four vertices. Figures \ref{figCases} (a), \ref{figCases} (b) and \ref{figCases} (d) show that $v,u_1$ do not distinguish $u_2,u_3$. Figure \ref{figCases} (c) shows that $u_1,u_2$ do not distinguish $v,u_3$. This analysis shows that  $\mathfrak{d}_t(G)\le n-2$, which is a contradiction and, as a consequence,  $\Delta(G)\le 2$. If $G$ is connected, then we have that $G$ is either a path or a cycle, and by Propositions \ref{propKPathG} and \ref{propKClyceG}, we deduce that
 $G\cong P_n$ for $n\le t+2$, or $G\cong C_n$ for an odd integer  $n\le 2t+1$.
 From now on we assume that $G$ is not connected.
Notice that  each connected component is either a path, or a cycle or an isolated vertex. If one of the connected components $G'$ has order at least three, then there exist three vertices  $v,x,y$ such that $N(v)=\{x,y\}$. Neither $v$ nor the vertices of connected components different from $G'$ are able to distinguish $x$ and $y$, which is a contradiction.  Thus, each connected component of $G$ has maximum degree at most one. Therefore, if $K_2$ is a connected component of $G$, then $G\cong K_1\cup K_2$, and if $G$ is empty, then $n=3$.
\end{proof}

\begin{figure}[h]
\centering
\begin{tikzpicture}[transform shape, inner sep = .7mm]
\node [draw=black, shape=circle, fill=white,text=black] (ua1) at (180:1.42) {};
\node [scale=.8] at ([yshift=-.3 cm]ua1) {$u_1$};
\node [draw=black, shape=circle, fill=white,text=black] (va) at (0,0) {};
\node [scale=.8] at ([yshift=-.3 cm]va) {$v$};
\node [draw=black, shape=circle, fill=white,text=black] (ua2) at (315:1.42) {};
\node [scale=.8] at ([yshift=-.3 cm]ua2) {$u_2$};
\node [draw=black, shape=circle, fill=white,text=black] (ua3) at (45:1.42) {};
\node [scale=.8] at ([yshift=.3 cm]ua3) {$u_3$};
\draw[black] (ua1) -- (va)  -- (ua2);
\draw[black] (va)  -- (ua3);
\node at ([yshift=-1.8 cm]va) {(a)};

\node [draw=black, shape=circle, fill=white,text=black, xshift=3.5 cm] (ub1) at (180:1.42) {};
\node [scale=.8] at ([yshift=-.3 cm]ub1) {$u_1$};
\node [draw=black, shape=circle, fill=white,text=black, xshift=3.5 cm] (vb) at (0,0) {};
\node [scale=.8] at ([yshift=-.3 cm]vb) {$v$};
\node [draw=black, shape=circle, fill=white,text=black, xshift=3.5 cm] (ub2) at (315:1.42) {};
\node [scale=.8] at ([yshift=-.3 cm]ub2) {$u_2$};
\node [draw=black, shape=circle, fill=white,text=black, xshift=3.5 cm] (ub3) at (45:1.42) {};
\node [scale=.8] at ([yshift=.3 cm]ub3) {$u_3$};
\draw[black] (ub1) -- (vb)  -- (ub2) -- (ub3) -- (vb);
\node at ([yshift=-1.8 cm]vb) {(b)};

\node [draw=black, shape=circle, fill=white,text=black] (vc) at ([xshift=2.6 cm]ub2) {};
\node [scale=.8] at ([yshift=-.3 cm]vc) {$v$};
\node [draw=black, shape=circle, fill=white,text=black] (uc1) at ([shift=({180:1.42})]vc) {};
\node [scale=.8] at ([yshift=-.3 cm]uc1) {$u_1$};
\node [draw=black, shape=circle, fill=white,text=black] (uc2) at ([shift=({0:1.42})]vc) {};
\node [scale=.8] at ([yshift=-.3 cm]uc2) {$u_2$};
\node [draw=black, shape=circle, fill=white,text=black] (uc3) at ([shift=({90:1.42})]vc) {};
\node [scale=.8] at ([yshift=.3 cm]uc3) {$u_3$};
\draw[black] (uc1) -- (vc)  -- (uc2) -- (uc3) -- (uc1);
\draw[black] (vc) -- (uc3);
\node at ([yshift=-.8 cm]vc) {(c)};

\node [draw=black, shape=circle, fill=white,text=black] (vd) at ([xshift=7.3 cm]vb) {};
\node [scale=.8] at ([xshift=.3 cm]vd) {$v$};
\node [draw=black, shape=circle, fill=white,text=black] (ud1) at ([shift=({225:1.42})]vd) {};
\node [scale=.8] at ([yshift=-.3 cm]ud1) {$u_1$};
\node [draw=black, shape=circle, fill=white,text=black] (ud2) at ([shift=({315:1.42})]vd) {};
\node [scale=.8] at ([yshift=-.3 cm]ud2) {$u_2$};
\node [draw=black, shape=circle, fill=white,text=black] (ud3) at ([shift=({90:1.42})]vd) {};
\node [scale=.8] at ([yshift=.3 cm]ud3) {$u_3$};
\draw[black] (ud1) -- (vd)  -- (ud2) -- (ud3) -- (ud1) -- (ud2);
\draw[black] (vd) -- (ud3);
\node at ([yshift=-1.8 cm]vd) {(d)};
\end{tikzpicture}
\caption{Possible cases for a vertex $v$ with three neighbours  $u_1,u_2,u_3$.}\label{figCases}
\end{figure}
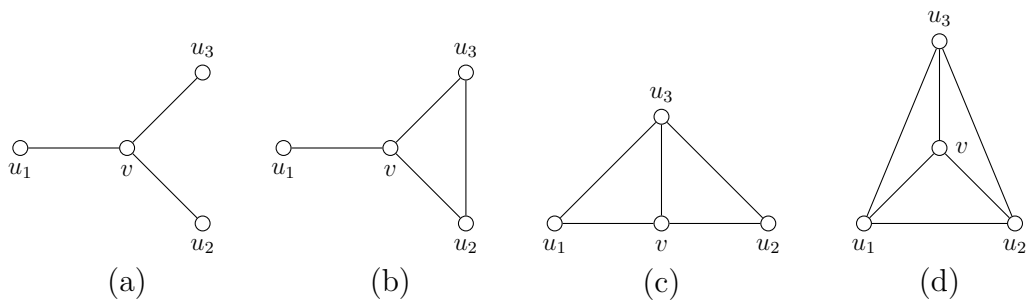

\section{On the $(k,t)$-metric dimension of graphs}\label{bound-k_DimG}

In this section we study the problem of computing or bounding the $(k,t)$-metric dimension. Since for any graph $G$ of order $n\ge 2$, we have that $\mathfrak{d}_1(G)=2$,  $\dim_1^1(G)=n-1$ and  $\dim_2^1(G)=n$, from now on we assume that $t\ge 2$. To begin with, we consider the limit case of the trivial bound $\dim_{k}^t(G)\ge k$.

\begin{theorem}\label{propValueClassic2G}
Let  $G$ be a graph of order $n\ge 2$. Then  $\dim_{k}^t(G)=k$ if and only if $k\in \{1,2\}$, $n\le t+1$ and \emph{(}$G\cong K_1\cup P_{n-1}$ or $G\cong P_{n}$\emph{)}.
\end{theorem}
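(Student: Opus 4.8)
The plan is to prove both implications, with everything reduced to a structural description of the graphs $G$ for which $\dim_1^t(G)=1$.

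For the ``only if'' part, suppose $\dim_k^t(G)=k$. As this value is finite, Theorem~\ref{firstConsequenceMonotonyG} forces $\dim_j^t(G)$ to be finite for all $j\le k$ and the chain $\dim_1^t(G)<\dim_2^t(G)<\cdots<\dim_k^t(G)=k$ to hold; being $k$ distinct positive integers bounded above by $k$, these are exactly $1,2,\dots,k$, so $\dim_1^t(G)=1$, i.e. some vertex $w$ is by itself a $(1,t)$-metric generator and hence $x\mapsto d_t(w,x)=\min\{d(w,x),t\}$ is injective on $V(G)$. I would then argue structurally. If $G$ is connected, a shortest-path argument gives that $d(w,\cdot)$ attains every integer from $0$ to the eccentricity of $w$; were that eccentricity at least $t+1$, two vertices would be sent to $t$, so it is at most $t$, whence $d_t(w,\cdot)=d(w,\cdot)$ and there is exactly one vertex $v_i$ at each distance $i\in\{0,\dots,n-1\}$ from $w=v_0$. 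Uniqueness then forces $v_i\sim v_{i-1}$, while an edge $v_iv_j$ with $j>i+1$ would contradict $d(w,v_j)\le i+1$; hence $G\cong P_n$ and $n\le t+1$. If $G$ is disconnected, every vertex outside the component of $w$ is sent to $t$, so there is at most one such vertex; thus $G$ is the disjoint union of a connected graph with an isolated vertex $z$, the connected part is $P_{n-1}$ with $w$ an endpoint by the previous case, and the requirement that $d_t(w,z)=t$ avoid the values $0,1,\dots,n-2$ forces $n\le t+1$, so $G\cong K_1\cup P_{n-1}$. It remains to exclude $k\ge 3$: since $n\le t+1$, for $1\le j\le n-2$ the distance-$2$ pair $\{v_{j-1},v_{j+1}\}$ has $\mathcal{D}_t(v_{j-1},v_{j+1})=V\setminus\{v_j\}$ (and $V\setminus\{v_j,z\}$ in the second family, as $z$ never separates two path vertices), so a $(3,t)$-metric generator of size $3$ would be contained in all these sets, hence avoid every internal path vertex (and $z$), leaving a set of size at most $2$, a contradiction; and the small orders $n\le 3$ give graphs that are only $(2,t)$-metric dimensional (Corollary~\ref{coro2DimensionalG}, Remark~\ref{remarkKMetricG}), with no $3$-metric generator at all. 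Hence $k\in\{1,2\}$, and the three asserted conditions hold.

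For the ``if'' part, suppose $n\le t+1$, $k\in\{1,2\}$ and $G\cong P_n$ or $G\cong K_1\cup P_{n-1}$; since $n\le t+1$, $d_t$ agrees with $d$ between path vertices and equals $t>n-2$ whenever the isolated vertex is involved. Taking $S=\{v_0\}$ with $v_0$ an endpoint of the path, the values $d_t(v_0,\cdot)$ are pairwise distinct (namely $0,1,\dots$ along the path, and $t$ on $z$), so $S$ is a $(1,t)$-metric generator and $\dim_1^t(G)=1$. Taking $S=\{v_0,v_{n-1}\}$ in the $P_n$ case and $S=\{v_0,v_{n-2}\}$ in the $K_1\cup P_{n-1}$ case (and simply $S=V$ when $n=2$), each of the two chosen vertices separates every pair of vertices of $G$, so $S$ is a $(2,t)$-metric generator; with the trivial bound $\dim_2^t(G)\ge 2$ this gives $\dim_2^t(G)=2$. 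Hence $\dim_k^t(G)=k$ for $k\in\{1,2\}$, completing the equivalence.

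The step I expect to be the main obstacle is the structural extraction of $G\cong P_n$ (resp. $G\cong K_1\cup P_{n-1}$) from the mere injectivity of $d_t(w,\cdot)$: one must keep track of the truncation $\min\{\cdot,t\}$ carefully — this is exactly where the constraint $n\le t+1$ is born — and handle the disconnected case separately, and then make the exclusion of $k\ge 3$ fully rigorous rather than merely plausible.
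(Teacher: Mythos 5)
Your proof is correct and takes essentially the same approach as the paper's: both arguments reduce to the observation that a $(k,t)$-generator of size exactly $k$ must consist of vertices each of which individually resolves every pair, forcing at most one vertex at each $d_t$-distance from such a vertex, and hence the path structure together with $n\le t+1$. You reach this via the monotonicity theorem (deducing $\dim_1^t(G)=1$) and exclude $k\ge 3$ by noting that internal path vertices and the isolated vertex fail to distinguish certain pairs, whereas the paper argues directly on a $(k,t)$-basis and bounds $k$ by a degree observation; these are only cosmetic differences.
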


\begin{proof}
It is readily seen that if $k\in \{1,2\}$, $n\le t+1$ and ($G\cong K_1\cup P_{n-1}$ or $G\cong P_{n}$), then $\dim_{k}^t(G)=k$.

Conversely, suppose that $\dim_{k}^t(G)=k$ and let $S$ be a $(k,t)$-metric basis of $G$.
Given $s\in S$ and a non-negative integer $r$, we define the set
$$\Gamma_r(s)=\{v\in V(G):\, d_t(v,s)=r\}.$$
Since $|S|=k$ and for any $x,y\in V(G)$, $|S\cap \mathcal{D}_t(x,y)|\ge k$,  we have that $S\subseteq \mathcal{D}_t(x,y)$, \textit{i.e.},  for any $s\in S$ and $x,y\in V(G)$, $d_t(s,x)\ne d_t(s,y)$. Hence, for any $s\in S$ and any non-negative integer $r$, we have $|\Gamma_r(s)|\le 1$, which implies that $n\le t+1$ and also $G\cong K_1\cup P_{n-1}$ or $G\cong P_{n}$. Notice that the vertices in $S$ must have degree at most one and so we deduce that $k=|S|\le 2$.
\end{proof}

The following result is a direct consequence of Theorems  \ref{firstConsequenceMonotonyG}  and \ref{theokdimensionalG}.

\begin{theorem}\label{TrivialUpperBound}
For any graph $G$ of order $n$ and any $k\in \{1,\dots , \mathfrak{d}_t(G)\}$,
$$\dim_k^t(G)\le n-\mathfrak{d}_t(G)+k.$$
\end{theorem}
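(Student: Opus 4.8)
The plan is to read off the bound from the two cited results in the obvious way: Theorem~\ref{theokdimensionalG} tells us that $G$ is $(\mathfrak{d}_t(G),t)$-metric dimensional, and Theorem~\ref{firstConsequenceMonotonyG}, applied to the metric space $(V,d_t)$, forces the sequence $\dim_1^t(G),\dim_2^t(G),\dots$ to strictly increase at each step as long as it stays finite. Write $d=\mathfrak{d}_t(G)$ for brevity.

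First I would record the trivial upper bound $\dim_d^t(G)\le n$. This follows because the whole vertex set $V$ is a $(d,t)$-metric generator for $G$: for every pair of distinct vertices $x,y$ we have $|\mathcal{D}_t(x,y)\cap V|=|\mathcal{D}_t(x,y)|\ge d$ by the very definition of $\mathfrak{d}_t(G)$. In particular $\dim_d^t(G)<+\infty$. Moreover, since $G$ is $(d,t)$-metric dimensional, for every $r\in\{1,\dots,d\}$ there is an $(r,t)$-metric basis of $G$ (as noted right after the definition of $(k,t)$-metric dimensional graph), so $\dim_r^t(G)<+\infty$ for all such $r$; this guarantees that we are always in case~(i), not case~(ii), of Theorem~\ref{firstConsequenceMonotonyG}.

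Then, applying part~(i) of Theorem~\ref{firstConsequenceMonotonyG} repeatedly to $(V,d_t)$ yields the strict chain
$$\dim_k^t(G)<\dim_{k+1}^t(G)<\cdots<\dim_d^t(G),$$
all terms finite integers, whence $\dim_d^t(G)\ge\dim_k^t(G)+(d-k)$. Combining this with $\dim_d^t(G)\le n$ gives
$$\dim_k^t(G)\le\dim_d^t(G)-(d-k)\le n-d+k=n-\mathfrak{d}_t(G)+k,$$
as claimed. There is no genuine obstacle in this argument; the only points that deserve a line of care are verifying the finiteness of all the dimensions $\dim_r^t(G)$ for $r\le\mathfrak{d}_t(G)$ (so that the strict monotony of Theorem~\ref{firstConsequenceMonotonyG} is available at every step) and the elementary observation that $V$ itself realises $\dim_{\mathfrak{d}_t(G)}^t(G)\le n$.
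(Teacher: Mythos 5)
Your proof is correct and follows exactly the route the paper intends: the paper states the theorem as a direct consequence of Theorems~\ref{firstConsequenceMonotonyG} and~\ref{theokdimensionalG} without further detail, and your argument (using $V$ as a $(\mathfrak{d}_t(G),t)$-metric generator to get $\dim_{\mathfrak{d}_t(G)}^t(G)\le n$, then descending via strict monotony) is precisely the fleshed-out version of that remark.
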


As the following result shows, the bound above is tight.

 \begin{remark}
Let     $k\ge 1$,  $t\ge 2$ and $n\ge 3$ be three integers. Then the following statements hold.
\begin{enumerate}[{\rm (i)}]
\item {\rm \cite{Estrada-Moreno2014a, JanOmo2012}} For any $n\ge 4$, $\dim_1^2(P_n)=\left\lfloor{\frac{2n+2}{5}}\right\rfloor$, $\dim_2^2(P_n)=\left\lceil\frac{n+1}{2}\right\rceil$ and $\dim_3^2(P_n)=n-\left\lfloor\frac{n-4}{5}\right\rfloor$.
\item If $ t \le n-2$ and $k\le t+1$, then $k+1\le \dim_k^t(P_n)\le n-t+k-1$.
\item Let $k+1\le n\le 2t-k+3$. If $k\ge 3$ or $n\ge t+2$, then $\dim_{k}^t(P_n)=k+1.$
\item {\rm \cite{Estrada-Moreno2014a, JanOmo2012}} For any $n\ge 4$, $\dim_1^2(C_n)=\left\lfloor{\frac{2n+2}{5}}\right\rfloor$, $\dim_2^2(C_n)=\left\lceil\frac{n}{2}\right\rceil$, $\dim_3^2(C_n)=n-\left\lfloor\frac{n}{5}\right\rfloor$ and $\dim_4^2(C_n)=n$.
\item Let $ k\le 2t$. If $n$ is odd and $t\le\frac{n-1}{2}$ or $n$ is even and $ t\le\frac{n-2}{2}$, then   $k+1\le \dim_k^t(C_n)\le n-2t+k$.
\item If $n$ is odd,  $t\ge\frac{n-1}{2}$ and $ k\le n-1$, then $\dim_k^t(C_n)=  k+1$.
\item  Let $n$ even and $t\ge\frac{n-2}{2}$. If $ k\le \frac{n-2}{2}$, then  $\dim_k^t(C_n)=k+1$ and, if $\frac{n}{2}\le k  \le n-2$, then  $\dim_k^t(C_n)=k+2$.
\end{enumerate}
\end{remark}

\begin{proof}
By combining  Proposition \ref{propKPathG}    and Theorems \ref{propValueClassic2G}  and \ref{TrivialUpperBound}  we deduce (ii) and by combining  Proposition \ref{propKClyceG}  and Theorems \ref{propValueClassic2G}  and \ref{TrivialUpperBound}  we deduce (v) and (vi).

We now proceed to prove (iii). Let $V=\{v_1,v_2,\dots, v_n\}$ be the vertex set of $P_n$, where $v_i\sim v_{i+1}$, for all $i\in \{1,\dots,n-1\}$,    and set $$\displaystyle S=\left\{v_{\left\lceil\frac{n}{2}\right\rceil-\left\lfloor\frac{k}{2}\right\rfloor},v_{\left\lceil\frac{n}{2}\right\rceil-\left\lfloor\frac{k}{2}\right\rfloor+1},\ldots,v_{\left\lceil\frac{n}{2}\right\rceil+\left\lceil\frac{k}{2}\right\rceil}\right\}.$$ Note that $|S|=k+1$. If $n\le 2t-k+3$, then for any pair of different vertices $u,v\in V $ there exists at most one vertex $w\in S$ such that $d_t(w,u)=d_t(w,v)$. Thus, for every pair of different vertices $x,y\in V $, there exist at least $k$ vertices of $S$ such that they distinguish $x,y$. So $S$ is a $(k,t)$-metric generator for $P_n$. Therefore, $\dim_{k}(P_n,t)\le |S|=k+1$ and, consequently, (iii) follows by Theorem   \ref{propValueClassic2G}.

Finally, we   proceed to prove (vii).
By combining  Proposition \ref{propKClyceG}  and Theorems \ref{propValueClassic2G}  and \ref{TrivialUpperBound}  we deduce that for
$n$  even,   $t\ge\frac{n-2}{2}$ and $1\le k\le n-2$, we have $k+1\le \dim_k^t(C_n)\le k+2$.

Let $S$ be $(k,t)$-metric basis of $C_n$. Notice that $|S|=k+1$ or $|S|=k+2$. If $k\ge\frac{n}{2}$, then there are two antipodal vertices, $u$ and $v$, belonging to $S$.    Thus, there exist at least two vertices of $C_n$ which are not  distinguished neither by $u$ nor by $v$, which implies  that $|S|=k+2$.

Suppose that $k<\frac{n}{2}$.  Since $t\ge\frac{n-2}{2}$, any set of $k+1$ consecutive vertices of $C_n$ is a $(k,t)$-metric generator and, in such a case $\dim_k^t(C_n)=k+1$. Therefore, the proof of (vii) is complete.
\end{proof}

Let $\mathfrak{D}_{t,k}(G)$ be the set obtained as the union of the sets ${\cal D}_t(x,y)$ that distinguish a pair of different vertices $x,y$ whenever $|{\cal D}_t(x,y)|=k$, {\it i.e.},
$$\mathfrak{D}_{t,k}(G)=\bigcup_{|{\cal D}_t(x,y)|=k}{\cal D}_t(x,y).$$

By a reasoning similar to that described in the proof of Theorem \ref{theokdimensionalG}
 we can check that  the time complexity of computing $\mathfrak{D}_{t,k}(G)$  is $O(n^3)$.

\begin{remark}\label{remTaukG}
For any $(\mathfrak{d}_t(G),t)$-metric basis $B$ of a graph $G$ we have $\mathfrak{D}_{t,\mathfrak{d}_t(G)}(G)\subseteq B$, and as a consequence, $\dim_{\mathfrak{d}_t(G)}^t(G)\geq |\mathfrak{D}_{t,\mathfrak{d}_t(G)}(G)|$
\end{remark}

\begin{proof}
Since every pair of different vertices $x,y$ is distinguished only by the elements of ${\cal D}_t(x,y)$, if $|\mathcal{D}_t(u,v)|=\mathfrak{d}_t(G)$ for some $u,v$ of $G$, then for any $(\mathfrak{d}_t(G),t)$-metric basis $B$ we have $\mathcal{D}_t(u,v)\subseteq B$, and as a consequence,
$\mathfrak{D}_{t,\mathfrak{d}_t(G)}(G)\subseteq B$. Therefore, the result follows.
\end{proof}

The bound given in  Remark \ref{remTaukG} is tight. For instance, for $t\ge D(G)$ we already shown in \cite{Estrada-Moreno2013} that there exists a family of trees attaining this bound for every $k$. Other examples for any positive integer $t\ge 2$ can be derived from the following result.

\begin{theorem}\label{theoDimknG}
Let  $G=(V,E)$ be a graph   of order $n\ge 2$. Then the following assertions  hold.
\begin{enumerate}[{\rm (i)}]
\item  $\dim_{\mathfrak{d}_t(G)}^t(G)=n$ if and only if $\mathfrak{D}_{t,\mathfrak{d}_t(G)}(G)=V$.

\item  If $|\mathfrak{D}_{t,\mathfrak{d}_t(G)}(G)|=n-1$, then $\dim_{\mathfrak{d}_t(G)}^t(G)=n-1$.
\end{enumerate}
\end{theorem}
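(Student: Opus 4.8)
The plan is to prove both assertions by combining the lower bound from Remark~\ref{remTaukG} with the trivial upper bound $\dim_{\mathfrak{d}_t(G)}^t(G)\le n$ coming from the fact that $V$ itself is a $(\mathfrak{d}_t(G),t)$-metric generator (equivalently, apply Theorem~\ref{TrivialUpperBound} with $k=\mathfrak{d}_t(G)$). For part (i), the forward implication is immediate: if $\mathfrak{D}_{t,\mathfrak{d}_t(G)}(G)=V$ then Remark~\ref{remTaukG} gives $\dim_{\mathfrak{d}_t(G)}^t(G)\ge |\mathfrak{D}_{t,\mathfrak{d}_t(G)}(G)|=n$, and together with the upper bound we get equality. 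For the reverse implication I would argue by contraposition: if $\mathfrak{D}_{t,\mathfrak{d}_t(G)}(G)\subsetneq V$, pick a vertex $w\notin \mathfrak{D}_{t,\mathfrak{d}_t(G)}(G)$ and show that $V\setminus\{w\}$ is still a $(\mathfrak{d}_t(G),t)$-metric generator, whence $\dim_{\mathfrak{d}_t(G)}^t(G)\le n-1<n$.

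The key step is therefore to verify that deleting such a $w$ does not destroy the generator property. Let $k=\mathfrak{d}_t(G)$ and take any pair of distinct vertices $x,y\in V$. We know $|\mathcal{D}_t(x,y)\cap V|=|\mathcal{D}_t(x,y)|\ge k$ by Theorem~\ref{theokdimensionalG}. If $|\mathcal{D}_t(x,y)|\ge k+1$, then $|\mathcal{D}_t(x,y)\cap (V\setminus\{w\})|\ge k$ regardless of whether $w$ distinguishes $x,y$. If $|\mathcal{D}_t(x,y)|=k$, then by definition of $\mathfrak{D}_{t,k}(G)$ we have $\mathcal{D}_t(x,y)\subseteq \mathfrak{D}_{t,k}(G)$, and since $w\notin \mathfrak{D}_{t,k}(G)$ we conclude $w\notin \mathcal{D}_t(x,y)$, so $\mathcal{D}_t(x,y)\cap(V\setminus\{w\})=\mathcal{D}_t(x,y)$ has cardinality $k$. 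In either case $V\setminus\{w\}$ distinguishes $x,y$ with at least $k$ vertices, so it is a $(k,t)$-metric generator and part (i) follows.

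For part (ii), suppose $|\mathfrak{D}_{t,k}(G)|=n-1$, say $V\setminus\mathfrak{D}_{t,k}(G)=\{w\}$. The same argument as above shows $V\setminus\{w\}=\mathfrak{D}_{t,k}(G)$ is a $(k,t)$-metric generator, so $\dim_k^t(G)\le n-1$. On the other hand, Remark~\ref{remTaukG} gives $\dim_k^t(G)\ge |\mathfrak{D}_{t,k}(G)|=n-1$. Hence $\dim_{\mathfrak{d}_t(G)}^t(G)=n-1$.

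I do not expect a serious obstacle here; the whole proof hinges on the single observation that a vertex lying outside $\mathfrak{D}_{t,k}(G)$ can be safely removed because the only pairs for which every distinguishing vertex matters are exactly those with $|\mathcal{D}_t(x,y)|=k$, and those are precisely the pairs captured by $\mathfrak{D}_{t,k}(G)$. The mildest point to state carefully is that in part (ii) one must note $\mathfrak{D}_{t,k}(G)$ is nonempty (which holds since $G$ being $(k,t)$-metric dimensional means some pair achieves $|\mathcal{D}_t(x,y)|=k$), so that $n-1\ge 1$ and the deleted vertex is genuinely unique; but this is automatic from Theorem~\ref{theokdimensionalG}.
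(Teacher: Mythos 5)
Your proof is correct and follows essentially the same route as the paper: the lower bound from Remark~\ref{remTaukG} plus the trivial upper bound for the forward direction of (i), and for the converse the observation that a vertex outside $\mathfrak{D}_{t,\mathfrak{d}_t(G)}(G)$ can be deleted without destroying the generator property (you spell out the two cases $|\mathcal{D}_t(x,y)|=k$ and $|\mathcal{D}_t(x,y)|\ge k+1$ a bit more explicitly than the paper does, and in (ii) you re-run that argument where the paper simply cites part (i), but the substance is identical).
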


\begin{proof}
Suppose that $\mathfrak{D}_{t,\mathfrak{d}_t(G)}(G)=V$. Since  $\dim_{\mathfrak{d}_t(G)}^t(G)\le n$, by Remark \ref{remTaukG} we obtain that $\dim_{\mathfrak{d}_t(G)}^t(G)=n$.

On the other hand, assume that  $\dim_{\mathfrak{d}_t(G)}^t(G)=n$. Note that for every
$a,b\in V $, we have $|{\cal D}_t(a,b)|\ge \mathfrak{d}_t(G)$. If there exists at least one vertex $x\in V$ such that $x\notin \mathfrak{D}_{t,\mathfrak{d}_t(G)}(G)$, then for every $a,b\in V $, we have $|{\cal D}_t(a,b)\setminus \{x\}|\ge \mathfrak{d}_t(G)$ and, as a consequence, $V \setminus \{x\}$ is a  $(\mathfrak{d}_t(G),t)$-metric generator for $G$, which is a contradiction. Therefore, $ \mathfrak{D}_{t,\mathfrak{d}_t(G)}(G)=V$.

Finally,  if $|\mathfrak{D}_{t,\mathfrak{d}_t(G)}(G)|=n-1$, by Remark \ref{remTaukG} and (i) we conclude that (ii) follows.
\end{proof}

\begin{corollary}\label{remarkDim2nG}
Let $G$ be a graph of order $n\geq 2$. Then $\dim_2^t(G)=n$ if and only if every vertex of $G$  belongs to a non-singleton twin equivalence class.
\end{corollary}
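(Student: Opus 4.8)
The plan is to turn the equality $\dim_2^t(G)=n$ into a statement about which single vertices may be deleted from $V$ while keeping a $(2,t)$-metric generator, and then to identify exactly those vertices. The starting point is that $V$ is always a $(2,t)$-metric generator, since $|\mathcal{D}_t(x,y)|\ge 2$ for every pair of distinct vertices, and that any superset of a $(2,t)$-metric generator is again one. Consequently $\dim_2^t(G)<n$ if and only if $V\setminus\{v\}$ is a $(2,t)$-metric generator for some $v\in V$, so that $\dim_2^t(G)=n$ if and only if $V\setminus\{v\}$ fails to be a $(2,t)$-metric generator for every $v\in V$.

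Next I would describe this failure precisely. For a fixed $v$, the set $V\setminus\{v\}$ is not a $(2,t)$-metric generator exactly when there are distinct vertices $x,y$ with $|\mathcal{D}_t(x,y)\setminus\{v\}|\le 1$; since $|\mathcal{D}_t(x,y)|\ge 2$ always, and since $x,y\in\mathcal{D}_t(x,y)$, this forces $\mathcal{D}_t(x,y)=\{x,y\}$ together with $v\in\{x,y\}$. Hence $V\setminus\{v\}$ fails to be a $(2,t)$-metric generator if and only if $v$ belongs to some pair $\{x,y\}$ with $\mathcal{D}_t(x,y)=\{x,y\}$. I would then invoke the observation recorded in Section~\ref{Sect(k,t)dimensional} (valid because we assume $t\ge 2$) that $\mathcal{D}_t(x,y)=\{x,y\}$ holds precisely when $x$ and $y$ belong to the same twin equivalence class. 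Combining the pieces yields the desired equivalence: $\dim_2^t(G)=n$ if and only if every vertex $v$ is a twin of some other vertex, i.e.\ every vertex belongs to a non-singleton twin equivalence class. Note that this argument already covers $n=2$, where both vertices always form a single twin class of size two.

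The only delicate point, and hence the thing to be careful about rather than a genuine obstacle, is the correct use of the equivalence between $\mathcal{D}_t(x,y)=\{x,y\}$ and $x,y$ being twins; this is exactly where the standing hypothesis $t\ge 2$ enters (for $t=1$ one instead has $\dim_2^1(G)=n$ for every $G$). An alternative, slightly longer route would first use the monotonicity of $\dim_k^t$ in $k$ (Theorem~\ref{firstConsequenceMonotonyG}), together with $\dim_{\mathfrak{d}_t(G)}^t(G)\le n$, to show that $\dim_2^t(G)=n$ forces $\mathfrak{d}_t(G)=2$, then apply Theorem~\ref{theoDimknG}(i) and identify $\mathfrak{D}_{t,2}(G)$ with the union of all non-singleton twin classes (again via the twin characterization, in the spirit of Corollary~\ref{coro2DimensionalG}); the direct route above avoids this detour.
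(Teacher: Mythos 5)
Your proof is correct and follows essentially the same route as the paper: the corollary is stated there as a consequence of Theorem~\ref{theoDimknG}(i), whose proof is precisely the one-vertex-removal argument you write out directly, combined with the observation from Section~\ref{Sect(k,t)dimensional} that $\mathcal{D}_t(x,y)=\{x,y\}$ (for $t\ge 2$) characterizes twins. Your inlined version, including the correct handling of the case where no twins exist and of $n=2$, matches the intended derivation.
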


We will show other examples of graphs that satisfy Theorem \ref{theoDimknG} for $k\ge 3$. Let $W_{1,n}=K_1+C_n$ be the wheel graph and $F_{1,n}=K_1+P_n$ be the fan graph. Since $V(F_{1,4})=\mathfrak{D}_{3,t}(F_{1,4})$ and $V(W_{1,5})=\mathfrak{D}_{4,t}(W_{1,5})$, by Theorem \ref{theoDimknG} we have that $\dim_3^t(F_{1,4})=5$ and $\dim_4^t(W_{1,5})=6$.

\subsection{Large families of graphs having a common $(k,t)$-metric generator} \label{SectionFamilies(k,t)Metric}

The aim of this subsection is to show examples   of large families of graphs (defined on a common vertex set) having a common $(k,t)$-metric generator.
We will use the notation  $d_{G,t}(x,y)$ instead of $d_t(x,y)$ with the aim of emphasising that the distance has been defined on $G$.

Let $B$ be a $(k,t)$-metric basis of a graph $G=(V,E)$ of diameter $D(G)$, and let $D_t(G)=\min\{D(G),t\}$. For any $r \in \{0,1,\ldots,D_t(G)\}$ we define the set $${\rm \bf B}_{r}(B)=\displaystyle\bigcup_{x\in B}\{y\in V:\; d_{G,t}(x,y)\le r\}.$$\label{ball}
In particular, ${\rm {\bf  B}}_{0}(B)=B$ and ${\rm {\bf  B}}_{1}(B)=\displaystyle\bigcup_{x\in B}N_G[x] $. Moreover,  since $B$ is a $(k,t)$-metric basis of $G$, $|{\rm {\bf  B}}_{D_t(G)-1}(B)|\ge |V|-1$.

Assume that  $G\not\cong K_n$. Given a $(k,t)$-metric basis $B$ of $G$ we say that a graph $G'=(V,E')$ belongs to the  family  ${\mathcal{G}}_B(G)$\label{familCommonGen} if and only if $N_{G'}(v)=N_G(v)$, for every $v\in {\rm {\bf  B}}_{D_t(G)-2}(B)$. In particular, if $t=2$, then  $G'=(V,E')$ belongs to the  family  ${\mathcal{G}}_B(G)$ if and only if $N_{G'}(x)=N_G(x)$, for every $x\in B$. Moreover, if $G$ is a complete graph, we define ${\mathcal{G}}_B(G)=\{G\}$. By the definition of ${\mathcal{G}}_B(G)$, we deduce the following remark.

\begin{remark}\label{remarkSameDistanceIsomorphic}
Let $B$ be a $(k,t)$-metric basis of a connected  graph $G$,  and let $G'\in {\mathcal{G}}_B(G)$. Then for any $b\in B$ and $v\in {\rm {\bf  B}}_{D_t(G)-1}(B)$,
$d_{G,t}(b,v)=d_{G',t}(b,v).$ 
\end{remark}

Notice that if  ${\rm {\bf{B}}}_{D_t(G)-2}(B)\subsetneq V$, then  any graph $G'\in {\mathcal{G}}_B(G)$ is isomorphic to a graph $G^*=(V,E^*)$ whose edge set $E^*$ can be partitioned into two sets $E^*_1$, $E^*_2$, where $E^*_1$ consists of all edges of $G$ having at least one vertex in ${\rm {\bf  B}}_{D(G)-2}(B)$ and $E^*_2$ is a subset of edges of a complete graph whose vertex set is $V\setminus {\rm {\bf{B}}}_{D_t(G)-2}(B)$. Hence, if $\displaystyle l={|V\setminus {\rm {\bf{B}}}_{D_t(G)-2}(B)|\choose 2}$, then ${\cal{G}}_B(G)$ contains
$2^l$ different graphs, where some of them could be isomorphic.

\begin{figure}[!ht]
\centering
\begin{tikzpicture}[transform shape, inner sep = .7mm]
\foreach \y in {0,1}
{
\foreach \x in {0,1,2}
{
\ifthenelse{\x=0 \AND \y=0}
{
\node [draw=black, shape=circle, fill=white] (v001) at (0,0) {};
\node [scale=.8] at ([yshift=-.3 cm]v001) {$v_1$};
\foreach \ind in {2,...,5}
{
\pgfmathparse{(1.5-\ind+2)*.8};
\node [draw=black, shape=circle, fill=black] (v00\ind) at ([shift=({(1.8,\pgfmathresult)})]v001) {};
\node [scale=.8] at ([yshift=-.32 cm]v00\ind) {$v_\ind$};
\draw[black] (v001) -- (v00\ind);
}
\foreach \ind in {6,...,9}
{
\pgfmathparse{(1.5-\ind+6)*1.2};
\node [draw=black, shape=circle, fill=white] (v00\ind) at ([shift=({(3.6,\pgfmathresult)})]v001) {};
\node [scale=.8] at ([yshift=-.3 cm]v00\ind) {$v_\ind$};
}
\foreach \ind in {2,3}
\draw[black] (v006) -- (v00\ind);
\foreach \ind in {3,4}
\draw[black] (v007) -- (v00\ind);
\foreach \ind in {4,5}
\draw[black] (v008) -- (v00\ind);
\foreach \ind in {2,5}
\draw[black] (v009) -- (v00\ind);
}
{
\pgfmathparse{\y*(-4.5)};
\node [draw=black, shape=circle, fill=white] (v\y\x1) at (4.5*\x,\pgfmathresult) {};
\node [scale=.8] at ([yshift=-.3 cm]v\y\x1) {$v_1$};
\foreach \ind in {2,...,5}
{
\pgfmathparse{(1.5-\ind+2)*.8};
\node [draw=black, shape=circle, fill=black] (v\y\x\ind) at ([shift=({(1.8,\pgfmathresult)})]v\y\x1) {};
\node [scale=.8] at ([yshift=-.3 cm]v\y\x\ind) {$v_\ind$};
\draw[black] (v\y\x1) -- (v\y\x\ind);
}
\foreach \ind in {6,...,9}
{
\pgfmathparse{(1.5-\ind+6)*1.2};
\node [draw=black, shape=circle, fill=white] (v\y\x\ind) at ([shift=({(3.6,\pgfmathresult)})]v\y\x1) {};
\node [scale=.8] at ([yshift=-.3 cm]v\y\x\ind) {$v_\ind$};
}
\foreach \ind in {2,3}
\draw[black] (v\y\x6) -- (v\y\x\ind);
\foreach \ind in {3,4}
\draw[black] (v\y\x7) -- (v\y\x\ind);
\foreach \ind in {4,5}
\draw[black] (v\y\x8) -- (v\y\x\ind);
\foreach \ind in {2,5}
\draw[black] (v\y\x9) -- (v\y\x\ind);
};
}
}
\node at ([yshift=-1 cm]v005) {$G$};
\draw [black] (v006.east) to[bend left] (v007.east);
\foreach \y in {0,1}
{
\foreach \x in {0,1,2}
{
\ifthenelse{\x>0 \OR \y>0}
{
\pgfmathparse{int(\y*3+\x)};
\node at ([yshift=-1 cm]v\y\x5) {$G_\pgfmathresult$};
}
{};
}
}
\draw [dashed] (v111.north) to[bend left] (v116.north);
\draw [dashed] (v121.north) to[bend left] (v126.north);
\draw [dashed] (v121.south) to[bend right] (v129.south);
\draw (v016.east) to[bend left] (v017.east);
\draw [dashed] (v017.east) to[bend left] (v018.east);
\draw (v026.east) to[bend left] (v027.east);
\draw [dashed] (v027.east) to[bend left] (v028.east);
\draw [dashed] (v028.east) to[bend left] (v029.east);
\end{tikzpicture}
\caption{$B=\{v_2,v_3,v_4,v_5\}$ is a $(2,2)$-metric basis of $G$ and
$\{G,G_1,G_2,G_4,G_5\}\subset {\cal  G}_{B}(G)$.}\label{figExamplekPermutation}
\end{figure}
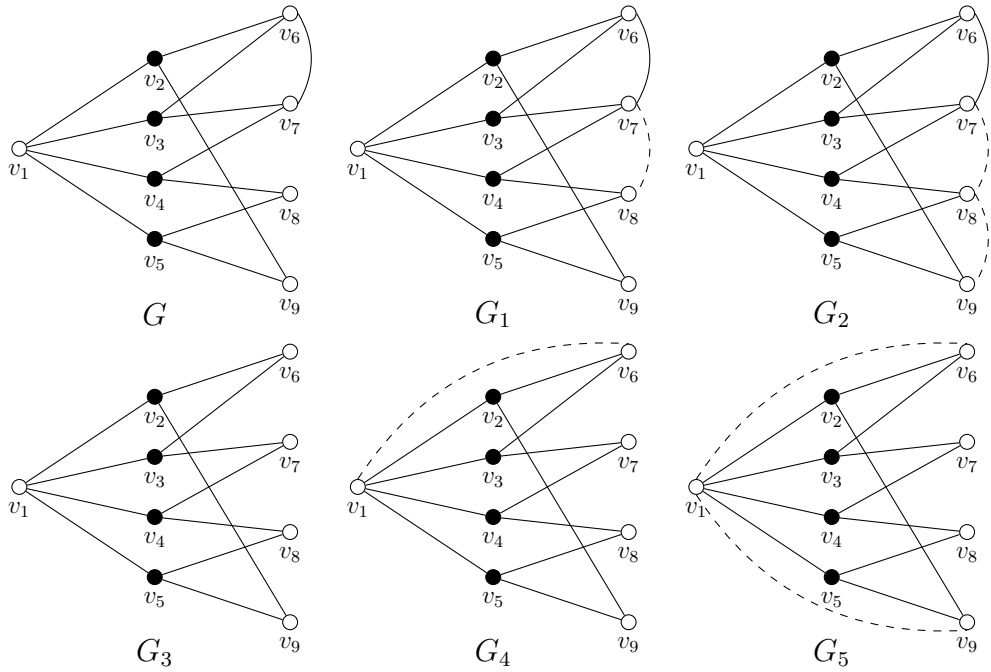

\begin{theorem}\label{Common_k_t_metric_generator}
Any $(k,t)$-metric basis $B$ of a graph $G$ is a $(k,t)$-metric generator for any graph $G'\in {\mathcal{G}}_B(G)$, and as a consequence, $$\dim_{k}^t(G')\le\dim_{k}^t(G).$$
\end{theorem}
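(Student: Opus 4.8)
The plan is to show that any $(k,t)$-metric basis $B$ of $G$ remains a $(k,t)$-metric generator when we pass to a graph $G' \in \mathcal{G}_B(G)$. The case $G \cong K_n$ is trivial since $\mathcal{G}_B(G) = \{G\}$, so assume $G \not\cong K_n$. Fix two distinct vertices $x,y \in V$; we must produce at least $k$ vertices of $B$ that distinguish $x$ and $y$ with respect to $d_{G',t}$. The key observation is Remark~\ref{remarkSameDistanceIsomorphic}: for any $b \in B$ and any $v \in {\rm\bf B}_{D_t(G)-1}(B)$ we have $d_{G,t}(b,v) = d_{G',t}(b,v)$. So for vertices inside the ball ${\rm\bf B}_{D_t(G)-1}(B)$, distances from $B$ are literally unchanged, and the set of elements of $B$ distinguishing such a pair in $G'$ coincides with the one in $G$.

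First I would dispose of the easy subcase: if both $x$ and $y$ lie in ${\rm\bf B}_{D_t(G)-1}(B)$, then for every $b \in B$ we have $d_{G',t}(b,x) = d_{G,t}(b,x)$ and $d_{G',t}(b,y) = d_{G,t}(b,y)$, so $\mathcal{D}_{G',t}(x,y) \cap B = \mathcal{D}_{G,t}(x,y) \cap B$, which has size at least $k$ because $B$ is a $(k,t)$-metric generator for $G$. The remaining subcase is when at least one of $x,y$ — say $x$ — lies outside ${\rm\bf B}_{D_t(G)-1}(B)$; by definition of that ball this means $d_{G,t}(b,x) = D_t(G) \cdot$ — more precisely $d_{G,t}(b,x) \ge D_t(G)$, hence $d_{G,t}(b,x) = t$ — for every $b \in B$ (using $D_t(G) = \min\{D(G),t\}$ and that $d_{G,t}$ never exceeds $t$). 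The point here is that since every neighbour set $N_{G'}(v)$ for $v \in {\rm\bf B}_{D_t(G)-2}(B)$ is preserved, any $G$-path of length at most $D_t(G)-1$ starting at a vertex of $B$ and staying consistent with the ball structure is still a path in $G'$, and conversely; I would argue that $d_{G',t}(b,x) = t$ as well for all $b \in B$, because a shortest $G'$-path from $b$ to $x$ of length $< t$ would, by the neighbourhood-preservation hypothesis applied along the ball ${\rm\bf B}_{D_t(G)-2}(B)$, force $x$ into ${\rm\bf B}_{D_t(G)-1}(B)$, a contradiction. Then $d_{G',t}(b,x) = t$ for all $b \in B$.

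With that in hand, consider the pair $x,y$ in $G'$. If $y$ also satisfies $d_{G',t}(b,y) = t$ for all $b \in B$, then no vertex of $B$ distinguishes $x$ from $y$ in $G'$ — but the same computation shows no vertex of $B$ distinguishes them in $G$ either, contradicting that $B$ is a $(k,t)$-metric generator for $G$ with $k \ge 1$; so this situation cannot occur. Hence $y$ has $d_{G',t}(b_0,y) < t$ for some $b_0 \in B$, which by the argument above forces $y \in {\rm\bf B}_{D_t(G)-1}(B)$, so $d_{G',t}(b,y) = d_{G,t}(b,y)$ for all $b \in B$. Now a vertex $b \in B$ distinguishes $x,y$ in $G'$ iff $d_{G',t}(b,x) \ne d_{G',t}(b,y)$, i.e. iff $t \ne d_{G,t}(b,y)$, i.e. iff $d_{G,t}(b,x) \ne d_{G,t}(b,y)$, which is exactly the condition for $b$ to distinguish $x,y$ in $G$. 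So again $\mathcal{D}_{G',t}(x,y) \cap B = \mathcal{D}_{G,t}(x,y) \cap B$, of size at least $k$. In all cases $B$ is a $(k,t)$-metric generator for $G'$, whence $\dim_k^t(G') \le |B| = \dim_k^t(G)$.

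The main obstacle I anticipate is the geometric bookkeeping in the step where $x$ lies outside the ball: one must verify carefully that the hypothesis ``$N_{G'}(v) = N_G(v)$ for all $v \in {\rm\bf B}_{D_t(G)-2}(B)$'' really does pin down all distances $d_{G',t}(b,\cdot)$ for vertices at $G$-distance up to $D_t(G)-1$ from $B$, and in particular that no ``shortcut'' through $V \setminus {\rm\bf B}_{D_t(G)-2}(B)$ can bring a previously-far vertex closer than $t$ in $G'$. This is essentially a statement that shortest paths from $B$ of length below the relevant threshold are confined to the region where the graph structure is frozen; making this precise (e.g. by induction on the radius $r \le D_t(G)-1$, showing $\{v : d_{G',t}(b,v) \le r\} = \{v : d_{G,t}(b,v) \le r\}$ for each $b \in B$, using that the $r$-ball's boundary lies in ${\rm\bf B}_{D_t(G)-2}(B)$ where neighbourhoods agree) is the technical heart of the argument, and is exactly the content already packaged in Remark~\ref{remarkSameDistanceIsomorphic}.
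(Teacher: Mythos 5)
Your argument is correct and follows essentially the same route as the paper: split on whether the two vertices lie in ${\rm {\bf B}}_{D_t(G)-1}(B)$, invoke Remark~\ref{remarkSameDistanceIsomorphic} when both are inside, observe that both outside is impossible because $B$ generates $G$, and note that an outside vertex stays at maximal truncated distance from every element of $B$ in $G'$. The only nitpick is that this maximal value is $D_t(G)=\min\{D(G),t\}$ rather than $t$ when $t>D(G)$; substituting $D_t(G)$ for $t$ in that step leaves your argument intact.
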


\begin{proof}
Assume that $B$ is a $(k,t)$-metric basis of a graph $G=(V,E)$, and $G'\in {\mathcal{G}}_B(G)$. We shall show that $B$ is a $(k,t)$-metric generator for $G'$. To this end, we take two different vertices $u,v\in V$. Since $B$ is a $(k,t)$-metric basis of $G$, there exists $B_{uv}\subseteq B$ such that $|B_{uv}|\ge k$ and for every $x\in B_{uv}$ we have that $d_{G,t}(x,u)\ne d_{G,t}(x,v)$. Now, consider the following two cases for $u,v$.
\\
\\
\noindent (1) $u,v\in {\rm {\bf  B}}_{D_t(G)-1}(B)$. In this case, since for every $x\in B_{uv}$ we have that $d_{G,t}(x,u)\ne d_{G,t}(x,v)$, Remark \ref{remarkSameDistanceIsomorphic} leads to $d_{G',t}(x,u)\ne d_{G',t}(x,v)$ for every $x\in B_{uv}$.
\\
\\
\noindent (2) $u\in {\rm {\bf  B}}_{D_t(G)-1}(B)$ and $v\not\in {\rm {\bf  B}}_{D_t(G)-1}(B)$. By definition of ${\rm {\bf  B}}_{D_t(G)-1}(B)$, we deduce that $d_{G',t}(x,u)\le D_t(G)-1$ for every $x\in B_{uv}$. Since $v\not\in {\rm {\bf  B}}_{D_t(G)-1}(B)$, we have that $d_{G',t}(x,v)= D_t(G)$ for every $x\in B_{uv}$. So, $d_{G',t}(x,u)\le D_t(G)-1<D_t(G)=d_{G',t}(x,v)$ for every $x\in B_{uv}$.
\\
\\
Notice that since $B$ is a $(k,t)$-metric basis of $G$, the case $u,v\not\in {\rm {\bf  B}}_{D_t(G)-1}(B)$ is not possible. According to the two cases above, $B$ is a $(k,t)$-metric generator for $G'$. Therefore, $\dim_{k}^t(G')\le |B| =\dim_{k}^t(G)$.
\end{proof}

By Theorems \ref{propValueClassic2G} and  \ref{Common_k_t_metric_generator} we deduce the following result.

\begin{remark}\label{Corollary-Famili-dim_k_t=k+1}
Let $B$ be a $(k,t)$-metric basis  of  a graph $G$ of order $n\ge t+2$ and let $G'\in {\cal  G}_B(G)$. If $\dim_{k}^t(G)=k+1$, then $\dim_{k}^t(G')=k+1.$
\end{remark}

Figure \ref{figExamplekPermutation} shows some graphs belonging to the  family  ${\cal  G}_{B}(G)$  having a common $(2,2)$-metric generator $B=\{v_2,v_3,v_4,v_5\}$.  In fact $B$ is also a common $(2,2)$-metric basis for all graphs belonging to ${\cal  G}_{B}(G)$. In
this case, the family ${\cal  G}_{B}(G)$ contains $2^{10}=1024$ different graphs, where some of them could be isomorphic.
\subsection{The case of lexicographic product graphs}\label{(k,t)-metricDim lexicographic}

Let $G$ be a graph of order $n$, and let $\mathcal{H}=\{H_1,H_2,\ldots,H_n\}$ be an ordered family composed by $n$ graphs. The \emph{lexicographic product} of $G$ and $\mathcal{H}$ is the graph $G \circ \mathcal{H}$, such that $V(G \circ \mathcal{H})=\bigcup_{u_i \in V(G)} (\{u_i\} \times V(H_i))$ and $(u_i,v_r)(u_j,v_s) \in E(G \circ \mathcal{H})$ if and only if $u_iu_j \in E(G)$ or $i=j$ and $v_rv_s \in E(H_i)$.
Figure \ref{figExampleOfLexiFamily} shows the lexicographic product of $P_3$ and the family composed by $\{P_4,K_2,P_3\}$, and the lexicographic product of $P_4$ and the family $\{H_1,H_2,H_3,H_4\}$, where $H_1 \cong H_4 \cong K_1$ and $H_2 \cong H_3 \cong K_2$. In general, we can construct the graph $G\circ\mathcal{H}$ by taking one copy of each $H_i\in\mathcal{H}$ and joining by an edge every vertex of $H_i$ with every vertex of $H_j$ for every $u_i u_j\in E(G)$.

\begin{figure}[!ht]
\centering
\begin{tikzpicture}[transform shape, inner sep = .7mm]
\pgfmathsetmacro{\espacio}{1};
\node [draw=black, shape=circle, fill=white] (v5) at (3,0) {};
\node [draw=black, shape=circle, fill=white] (v6) at (3,3*\espacio) {};
\draw[black] (v5) -- (v6);
\foreach \ind in {1,...,4}
{
\pgfmathsetmacro{\yc}{(\ind-1)*\espacio};
\node [draw=black, shape=circle, fill=white] (v\ind) at (0,\yc) {};
\draw[black] (v5) -- (v\ind);
\draw[black] (v6) -- (v\ind);
\ifthenelse{\ind>1}
{
\pgfmathtruncatemacro{\bind}{\ind-1};
\draw[black] (v\ind) -- (v\bind);
}
{};
}
\node [draw=black, shape=circle, fill=white] (v7) at (6,0) {};
\node [draw=black, shape=circle, fill=white] (v8) at (6,1.5*\espacio) {};
\node [draw=black, shape=circle, fill=white] (v9) at (6,3*\espacio) {};
\draw[black] (v7) -- (v8) -- (v9);
\foreach \ind in {7,...,9}
{
\draw[black] (v5) -- (v\ind);
\draw[black] (v6) -- (v\ind);
}
\end{tikzpicture}
\hspace{0.5cm}
\begin{tikzpicture}[transform shape, inner sep = .7mm]

\draw(-1,-1) -- (1,1)--(1,-1)--(-1,1)--(-2,0)--(-1,-1)--(1,-1)--(2,0)--(1,1)--(-1,1)--(-1,-1);
\node  at  (0,-1.5) {};

\node [draw=black, shape=circle, fill=white] at  (-1,-1) {};
\node [draw=black, shape=circle, fill=white] at  (-1,1) {};
\node [draw=black, shape=circle, fill=white] at  (1,-1) {};
\node [draw=black, shape=circle, fill=white] at  (1,1) {};
\node [draw=black, shape=circle, fill=white] at  (-2,0) {};
\node [draw=black, shape=circle, fill=white] at  (2,0) {};

\end{tikzpicture}
\caption{The lexicographic product graphs $P_3\circ\{P_4,K_2,P_3\}$ and $P_4\circ\{H_1,H_2,H_3,H_4\}$, where $H_1 \cong H_4 \cong K_1$ and $H_2 \cong H_3 \cong K_2$.}\label{figExampleOfLexiFamily}
\end{figure}
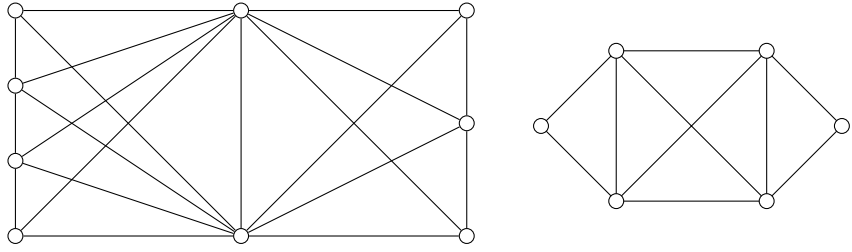

The standard concept of lexicographic product graph is the particular case when $H_i\cong H$ for every $i\in\{1,\ldots, n\}$ and it is denoted as $G\circ H$ for simplicity.  Another particular case of lexicographic product graphs is the join graph. The \emph{join graph} $G+H$\label{g join} is defined as the graph obtained from disjoint graphs $G$ and $H$ by taking one copy of $G$ and one copy of $H$ and joining by an edge each vertex of $G$ with each vertex of $H$ \cite{Harary1969,Zykov1949}. Note that $G+H\cong K_2\circ\{G,H\}$. 

Moreover, complete $k$-partite graphs,  $K_{p_1,p_2,\dots,p_k}\cong K_n \circ \{N_{p_1},N_{p_2},\dots ,N_{p_k}\}\cong N_{p_1}+N_{p_2}+\cdots +N_{p_k}$, are typical examples of join graphs.


The lexicographic product graph $G\circ \mathcal{H}$ connected if and only if $G$ is connected and, in such a case, the  relation between distances in $G\circ \mathcal{H}$ and those in its factors is presented in the following remark.

\begin{remark}\label{remarkDistLexi}
If $G$ is a connected graph and $(u_i,b)$ and $(u_j,d)$ are vertices of $G\circ \mathcal{H}$, then
$$d((u_i,b),(u_j,d))=\left\{\begin{array}{ll}

                            d(u_i,u_j), & \mbox{if $i\ne j$,} \\
                            \\
                            d_2(b,d), & \mbox{if $i=j$}.
                            \end{array} \right.
$$
\end{remark}
We would point out that the remark above was stated in  \cite{Hammack2011,Imrich2000} for the case where $H_i\cong H$ for all $H_i\in \mathcal{H}$.

The lexicographic product has been studied from different points of view in the literature. For instance, the metric dimension and related parameters have been studied in \cite{Estrada-Moreno2014b,Feng2012a, JanOmo2012, Kuziak2014, Ramirez_Estrada_Rodriguez_2015, Saputro2013}. For more information on product graphs we suggest the books \cite{Hammack2011,Imrich2000}.

The following result allows to extend the  results on the $(k,t)$-metric dimension of lexicographic product graphs $G\circ\mathcal{H}$ to results on the $(k,2)$-metric dimension of $G\circ\mathcal{H}$, and vice versa.

\begin{theorem}\label{lexiSame_Metric_Adj_dim}
Let $G$ be a connected graph of order $n\ge 2$ and let $\mathcal{H}=\{H_1,\ldots,H_n\}$ be a family composed by nontrivial graphs, and $t\ge 2$ an integer. A set $A\subseteq V(G\circ\mathcal{H})$ is a $(k,t)$-metric generator for $G\circ\mathcal{H}$ if and only if $A$ is a $(k,2)$-metric generator for $G\circ\mathcal{H}$, and
as a consequence, $$\dim_{k}^t(G\circ\mathcal{H})=\dim_k^2(G\circ\mathcal{H}).$$
\end{theorem}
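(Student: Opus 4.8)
The plan is to prove the stated equivalence of generators; the equality of dimensions then drops out. One implication is soft. I would first record the elementary fact, valid in any metric space with $t\ge 2$, that $\mathcal{D}_2(x,y)\subseteq\mathcal{D}_t(x,y)$: if $d_2(z,x)\ne d_2(z,y)$, say with $d_2(z,x)<d_2(z,y)$, then $d_2(z,x)\le 1$, so $d(z,x)=d_2(z,x)<t$ and $d(z,y)\ge d_2(z,y)>d(z,x)$, whence $d_t(z,x)=d(z,x)<d_t(z,y)$ because $d(z,x)+1\le 2\le t$. Consequently $|A\cap\mathcal{D}_2(x,y)|\ge k$ forces $|A\cap\mathcal{D}_t(x,y)|\ge k$ for every pair, so every $(k,2)$-metric generator of $G\circ\mathcal{H}$ is a $(k,t)$-metric generator (in agreement with Theorem~\ref{remarkMonotonyOnT}).

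For the converse, fix a $(k,t)$-metric generator $A$ of $G\circ\mathcal{H}$ and write $V_l=\{u_l\}\times V(H_l)$ for the $l$-th fibre. The first step is a \emph{fibre bound}: $|A\cap V_l|\ge k$ for every $l$. Indeed, since $H_l$ is nontrivial we may pick distinct $e,e'\in V(H_l)$; by Remark~\ref{remarkDistLexi}, every vertex $(u_m,f)$ with $m\ne l$ satisfies $d_t((u_m,f),(u_l,e))=\min\{d_G(u_m,u_l),t\}=d_t((u_m,f),(u_l,e'))$, so $\mathcal{D}_t((u_l,e),(u_l,e'))\subseteq V_l$, and $A$ must meet this set in at least $k$ vertices. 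The second step is to check $|A\cap\mathcal{D}_2(x,y)|\ge k$ for every pair of distinct vertices $x,y$ of $G\circ\mathcal{H}$. If $x,y$ lie in a common fibre $V_i$, then by Remark~\ref{remarkDistLexi} only vertices of $V_i$ distinguish them and all distances involved are at most $2$, so $\mathcal{D}_2(x,y)=\mathcal{D}_t(x,y)$ and the claim holds. Assume now $x=(u_i,b)$, $y=(u_j,d)$ with $i\ne j$. If $d_G(u_i,u_j)\ge 3$, let $u_l$ be the neighbour of $u_i$ on a shortest path from $u_i$ to $u_j$; then $d_G(u_l,u_i)=1$, $d_G(u_l,u_j)\ge 2$ and $l\notin\{i,j\}$, so every vertex of $V_l$ is at $d_2$-distance $1$ from $x$ and $2$ from $y$, giving $V_l\subseteq\mathcal{D}_2(x,y)$ and $|A\cap\mathcal{D}_2(x,y)|\ge|A\cap V_l|\ge k$ by the fibre bound.

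It remains to treat $d_G(u_i,u_j)\le 2$. Here the parts of $\mathcal{D}_2(x,y)$ and $\mathcal{D}_t(x,y)$ lying in $V_i\cup V_j$ coincide (all relevant distances are at most $2$), while for $l\notin\{i,j\}$ a vertex $(u_l,e)$ distinguishes $x,y$ under $d_s$ iff $\min\{d_G(u_l,u_i),s\}\ne\min\{d_G(u_l,u_j),s\}$, independently of $e$; hence $\mathcal{D}_t(x,y)\setminus\mathcal{D}_2(x,y)$ is a union of whole fibres $V_l$ with $l\notin\{i,j\}$. If this difference is empty, then $\mathcal{D}_2(x,y)=\mathcal{D}_t(x,y)$ and we are done; otherwise pick $l\notin\{i,j\}$ with $V_l\subseteq\mathcal{D}_t(x,y)\setminus\mathcal{D}_2(x,y)$. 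Then $\min\{d_G(u_l,u_i),2\}=\min\{d_G(u_l,u_j),2\}$ while $\min\{d_G(u_l,u_i),t\}\ne\min\{d_G(u_l,u_j),t\}$, which forces $d_G(u_l,u_i)\ne d_G(u_l,u_j)$ with both at least $2$; say $d_G(u_l,u_i)<d_G(u_l,u_j)$. Let $u_m$ be the neighbour of $u_i$ on a shortest path from $u_l$ to $u_i$, so $d_G(u_m,u_i)=1$ and $d_G(u_l,u_m)=d_G(u_l,u_i)-1$; combining $d_G(u_l,u_j)\le d_G(u_l,u_m)+d_G(u_m,u_j)$ with $d_G(u_l,u_j)>d_G(u_l,u_i)$ gives $d_G(u_m,u_j)\ge 2$, hence $m\notin\{i,j\}$ and $V_m\subseteq\mathcal{D}_2(x,y)$, so again $|A\cap\mathcal{D}_2(x,y)|\ge|A\cap V_m|\ge k$. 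This exhausts all pairs, so $A$ is a $(k,2)$-metric generator and $\dim_k^t(G\circ\mathcal{H})=\dim_k^2(G\circ\mathcal{H})$. I expect the main obstacle to be precisely this last subcase: one must rule out that the $k$ distinguishers $A$ is obliged to supply for $x,y$ all hide in the extra fibres visible under $d_t$ but not under $d_2$, and it is the fibre bound together with the short triangle-inequality argument producing the witness fibre $V_m$ that does this; everything else is bookkeeping with Remark~\ref{remarkDistLexi}.
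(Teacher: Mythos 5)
Your proof is correct. It rests on the same two pillars as the paper's argument: the easy inclusion $\mathcal{D}_2(x,y)\subseteq\mathcal{D}_t(x,y)$ for one direction, and, for the converse, the fibre bound $|A\cap(\{u_l\}\times V(H_l))|\ge k$ extracted from same-fibre pairs (the paper's Case~1), which is what ultimately supplies the $k$ distinguishing vertices. Where you diverge is in how the cross-fibre case $i\ne j$ is decomposed. The paper splits according to whether $u_i$ and $u_j$ are twins in $G$: if they are, every distinguishing vertex lies in $V_i\cup V_j$ at distance at most $2$, where the two metrics agree; if they are not, the definition of twins immediately yields a vertex $u_l\notin\{u_i,u_j\}$ with $d_2(u_l,u_i)\ne d_2(u_l,u_j)$, so the whole fibre $V_l$ lands in $\mathcal{D}_2(x,y)$. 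You never invoke the twin notion: you split on $d_G(u_i,u_j)$ and on whether $\mathcal{D}_t(x,y)\setminus\mathcal{D}_2(x,y)$ contains a fibre, and you manufacture the witness fibre $V_m$ by stepping once along a shortest path and applying the triangle inequality. Both routes are sound; the paper's twin dichotomy is shorter once one grants that non-twin vertices are separated by a third vertex at adjacency distance, while your version is more self-contained and makes explicit exactly which fibres are gained in passing from $d_2$ to $d_t$, at the cost of one extra subcase.
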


\begin{proof}
By definition, any $(k,2)$-metric generator for a graph is also a $(k,t)$-metric generator for $t\ge 2$.
Considering that any $(k,D(G))$-metric generator for a graph $G$ is also a $(k,t)$-metric generator for $t>D(G)$, we only need to prove that any $(k,D(G\circ\mathcal{H}))$-metric generator for $G\circ\mathcal{H}$ is also a $(k,2)$-metric generator.  For simplicity, we will use the terminology of $k$-metric generator and $k$-adjacency generator. Let $V(G)=\{u_1,\ldots,u_n\}$, let $S$ be a $k$-metric generator for $G\circ\mathcal{H}$, and let $S_i=S\cap (\{u_i\}\times V(H_i))$ for every $u_i\in V(G)$. We differentiate the following four cases for two different vertices  $(u_i,v),$ $(u_j,w)\in V(G\circ\mathcal{H})$.\\
\\Case 1. $i=j$. In this case $v\ne w$. By Remark \ref{remarkDistLexi}, no vertex from $S_l$, $l\ne i$, distinguishes $(u_i,v)$ and $(u_i,w)$. So it holds that $|\mathcal{D}((u_i,v),(u_i,w))\cap S_i|\ge k$. Since for any vertex $(u_i,x)\in S_i$ we have that $d((u_i,x),(u_i,v))=d_2((u_i,x),(u_i,v))$ and $d((u,x),(u_i,w))=d_2((u_i,x),(u_i,w))$, we conclude that $$k\le |\mathcal{D}_2((u_i,v),(u_i,w))\cap S_i|=|\mathcal{D}_2((u_i,v),(u_i,w))\cap S|.$$
Case 2. $i\ne j$ and $N[u_i]=N[u_j]$. By Remark \ref{remarkDistLexi}, no vertex from $S_l$, $l\notin\{i,j\}$, distinguishes $(u_i,v)$ and $(u_j,w)$. So $|\mathcal{D}((u_i,v),(u_j,w))\cap (S_i\cup S_j)|\ge k$. Since for any vertex $(u,x)\in S_i\cup S_j$ we have that $d((u,x),(u_i,v))=d_2((u,x),(u_i,v))$ and $d((u,x),(u_j,w))=d_2((u,x),(u_j,w))$, we conclude that $$k\le |\mathcal{D}_2 ((u_i,v), (u_j,w))\cap (S_i\cup S_j)|=|\mathcal{D}_2((u_i,v), (u_j,w))\cap S|.$$
Case 3. $i\ne j$ and $N(u_i)=N(u_j)$. This case is analogous to the previous one.\\
\\Case 4. $i\ne j$ and $u_i,u_j$ are not twins. Hence, there exists $u_l\in V(G)\setminus \{u_i,u_j\}$ such that $d_2(u_l,u_i)\ne d_2(u_l,u_j)$. Hence, for any vertex $(u_l,x)\in S_l$ we have that $$d_2((u_l,x),(u_i,v))=d_2((u_l,u_i)\ne d_2((u_l,u_j)=d_2((u_l,x),(u_j,w)).$$
According to Case $1$, we have that $|S_l|\ge k$. Therefore, we conclude that $$k\le |\mathcal{D}_2((u_i,v),(u_j,w))\cap S_l|\le |\mathcal{D}_2((u_i,v),(u_j,w))\cap S|.$$

In conclusion, $S$ is a $k$-adjacency generator for $G\circ\mathcal{H}$. The proof is complete.
\end{proof}

The reader is referred to \cite
{JanOmo2012,Saputro2013} for results on $\dim_1(G\circ\mathcal{H})$, and to  \cite{Estrada-Moreno2014b}  for results on $\dim_k(G\circ\mathcal{H})$, where $k\ge 2$.

\subsection{The case of corona product graphs}\label{coronas}

Let $G$ be a graph of order $n$ and let ${\cal H}=\{H_1,H_2,\ldots,H_n\}$ be a family of graphs. The \emph{corona product} graph $G\odot {\cal H}$\label{coroExtend}, introduced by  Frucht and Harary \cite{Frucht1970}, is defined as the graph obtained from $G$ and ${\cal H}$
by joining by an edge each vertex of $H_i$ with the ith vertex of $G$, for every $H_i\in \mathcal{H}$. Note that $G\odot {\cal H}$ is connected if and only if $G$ is connected. In particular, if the graphs in $\mathcal{H}$ are isomorphic to a given graph $H$, then we use the notation $G\odot H$ instead of  $G\odot {\cal H}$.

 The metric dimension and related parameters of corona product graphs have been studied in  \cite{Feng2012a, RV-F-2013, MR3218546, Yero2013d, Iswadi2011, Kuziak2013,  Rodriguez-Velazquez2013LDimCorona,RodriguezVelazquez,Yero2011}. In this subsection we will show that if $t\ge 3$ and $\mathcal{H}$ is composed by non-trivial graphs, then the $(k,t)$-metric dimension of $G\odot {\cal H}$ equals the sum of the $(k,t)$-metric dimensions of the graphs in $\mathcal{H}$.
In Section \ref{SectionComputational complexity} we will show that this strong relationship is an important tool to investigate the computational complexity of computing the $(k,t)$-metric dimension of graphs.

\begin{theorem}\label{kt-dim-corona}
Let $G$ be a connected graph of order $n\ge 2$, and $\mathcal{H}$ a family of $n$ non-trivial graphs. For any integers $t\ge 3$ and $k\ge 1$,
$$\dim_k^t(G\odot {\cal H})=\sum_{H\in \mathcal{H}}\dim_k^2(H).$$
\end{theorem}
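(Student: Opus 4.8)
The plan is to establish the equality via two inequalities, working directly with $(k,t)$-metric generators of $G\odot\mathcal{H}$. First I would set up notation: write $V(G)=\{u_1,\dots,u_n\}$, let $H_i$ be the copy of the $i$-th graph attached to $u_i$, and for a set $S\subseteq V(G\odot\mathcal{H})$ put $S_i=S\cap V(H_i)$. The key local observation, which I would prove as a preliminary lemma, is that because $t\ge 3$ the metric $d_t$ restricted to $V(H_i)\cup\{u_i\}$ behaves exactly like the adjacency metric $d_2$ on $V(H_i)$: any two vertices $x,y\in V(H_i)$ have $d(x,y)\in\{1,2\}$, the vertex $u_i$ is at distance $1$ from all of $V(H_i)$, and every vertex outside $V(H_i)\cup\{u_i\}$ is at distance $\ge 2$ from every vertex of $V(H_i)$ — hence at $d_t$-distance exactly equal to $\min\{d,t\}$ which for such a vertex and any pair inside $H_i$ is the same value $\ge 2$ unless the outside vertex is $u_j$ with $u_j\sim u_i$ (distance $2$ to everything in $H_i$) — in all cases nothing outside $V(H_i)$ distinguishes a pair $x,y\in V(H_i)$. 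This is where the hypothesis $t\ge 3$ is essential (for $t=2$ a vertex of $H_i$ at distance $1$ vs. $2$ from an inside pair still distinguishes, so the argument is unaffected, but the point is that larger $t$ cannot help either since distances inside $H_i$ never exceed $2$).

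For the lower bound $\dim_k^t(G\odot\mathcal{H})\ge\sum_{i}\dim_k^2(H_i)$: let $S$ be any $(k,t)$-metric generator for $G\odot\mathcal{H}$. Fix $i$ and a pair $x,y\in V(H_i)$. By the local observation, the only vertices that can distinguish $x$ and $y$ lie in $V(H_i)$, and for those vertices $d_t(\cdot,x)$ and $d_t(\cdot,y)$ agree with $d_2(\cdot,x)$ and $d_2(\cdot,y)$ computed in $H_i$. Hence $|\mathcal{D}_2^{H_i}(x,y)\cap S_i|=|\mathcal{D}_t^{G\odot\mathcal{H}}(x,y)\cap S|\ge k$, so $S_i$ is a $(k,2)$-metric generator for $H_i$ and $|S_i|\ge\dim_k^2(H_i)$. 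Since the $S_i$ are disjoint, $|S|\ge\sum_i|S_i|\ge\sum_i\dim_k^2(H_i)$.

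For the upper bound: take $B_i$ a $(k,2)$-metric basis of $H_i$ for each $i$ and set $B=\bigcup_i B_i$; I claim $B$ is a $(k,t)$-metric generator for $G\odot\mathcal{H}$. For a pair $x,y$ with both in the same $V(H_i)$, the previous paragraph's identity shows $B_i$ already supplies $k$ distinguishing vertices. For a pair with $x\in V(H_i)\cup\{u_i\}$ and $y\in V(H_j)\cup\{u_j\}$, $i\ne j$: any vertex $b\in B_i$ has $d_t(b,x)\le 2$ (it is in $H_i$, so distance $\le 2$ to $x$, which is either in $H_i$ or is $u_i$) while $d_t(b,y)\ge 2$; more carefully, one checks $d_t(b,x)<d_t(b,y)$ in most subcases, and in the borderline subcases one uses vertices of $B_j$ instead — the standard bookkeeping here mirrors the case analysis in the proof of Theorem~\ref{lexiSame_Metric_Adj_dim}. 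Since $H_j$ is nontrivial, $|B_j|\ge\dim_1^2(H_j)\ge 1$ and indeed $|B_j|\ge k$ by Theorem~\ref{firstConsequenceMonotonyG}, so whenever we must fall back on one side we still have at least $k$ distinguishers there. Handling the pair $u_i,x$ with $x\in V(H_i)$, and the pair $u_i,u_j$, is similar and easier. Collecting the cases, $B$ is a $(k,t)$-metric generator, so $\dim_k^t(G\odot\mathcal{H})\le|B|=\sum_i\dim_k^2(H_i)$.

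The main obstacle I anticipate is the cross-copy case analysis in the upper bound: one must verify carefully that for \emph{every} pair of vertices in distinct copies there are at least $k$ vertices of $B$ distinguishing them, cleanly splitting into "use $B_i$" versus "use $B_j$" without ever coming up short, and that the apex vertices $u_i$ (which are not in $B$) do not cause trouble. This requires the nontriviality of each $H_i$ (to guarantee $B_i\ne\varnothing$, in fact $|B_i|\ge k$ via the monotonicity theorem) and the condition $t\ge 3$, which guarantees that the "long" distances are genuinely pinned down by $d_t$ and that a vertex inside one copy never gets artificially far from a vertex it should distinguish. Everything else is routine.
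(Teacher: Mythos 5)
Your proposal is correct and takes essentially the same route as the paper's proof: the lower bound by restricting any $(k,t)$-metric generator to each copy $V(H_i)$ (no vertex outside $V(H_i)$ distinguishes an inside pair, and internal distances coincide with $d_2$ on $H_i$), and the upper bound by showing that the union of $(k,2)$-metric bases $B_i$ (each of size at least $k$) is a $(k,t)$-metric generator via a case split that falls back on $B_j$ whenever $B_i$ comes up short. One caution: the pair $u_i,x$ with $x\in V(H_i)$ is not ``easier'' --- a vertex of $B_i$ adjacent to $x$ in $H_i$ does \emph{not} separate it from $u_i$, so this is exactly a case where you must use $B_j$ for some neighbour $u_j$ of $u_i$ (here connectivity, $n\ge 2$ and $t\ge 3$ all enter); your fallback mechanism does cover it, and, amusingly, the paper's own Case 3 is imprecise on this very subcase.
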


\begin{proof}
We first introduce some notation. Let $V_0=\{u_1, u_2,\ldots, u_n\}$ be the vertex set of $G$, and let ${\cal H}=\{H_1,H_2,\ldots,H_n\}$.  For every $i\in \{1,\dots, n\}$, the vertex set of $H_i$ will be denoted by   $V_i$, so that the vertex set of $G\odot {\cal H}$ is $V=\bigcup_{i=0}^nV_i $.

If there exists a $(k,t)$-metric basis   $S$ for $G\odot {\cal H}$, then  $S\cap V_i$ is a $(k,2)$-metric generator for $H_i$, as no vertex outside of $V_i$ is able to distinguish two vertices in $V_i$ and $d_{t}(v,v')=d_{2}(v,v')$ for all $v,v'\in V_i$,
where the distance $d_t$ is taken on $G\odot {\cal H}$. Hence, $$\dim_k^t(G\odot {\cal H})=|S|\ge \sum_{i=1}^n|S\cap V_i|\ge \sum_{i=1}^n\dim_k^2(H_i).$$

We now proceed to show that $W=\bigcup_{i=1}^nW_i$  is a $(k,t)$-metric generator  for $G\odot {\cal H}$, where $W_i\subseteq V_i$ is a $(k,2)$-metric basis of $H_i$, for every $H_i\in \mathcal{H}$. To see this we differentiate the following cases for two different vertices $x,y\in V$.

\vspace{0,2cm}

\noindent Case 1: $x,y\in V_i$, $i\ne 0$. Since $W_i\subset W$ is a $(k,2)$-metric basis for $H_i$, and $d_{t}(v,v')=d_{2}(v,v')$ for all $v,v'\in V_i$, we can conclude that $|\mathcal{D}_t(x,y)\cap W|\ge k$.

\vspace{0,2cm}

\noindent Case 2: $x,y \in V_0$. Let $x=u_i$ and $y=u_j$. For any $z\in W_i$ we have $d_t(z,x)=1<d_t(z,y)$ and so $|\mathcal{D}_t(x,y)\cap W|\ge |W_i|\ge k$.

\vspace{0,2cm}

\noindent Case 3: $x \in V_i$ and $y\in V_j$, $i\ne j$. If $y=u_l\in V_0\setminus N_G(u_i)$ or $j\ne 0$, then for any $z\in W_i$ we have $d_t(z,x)\le 2<3\le d_t(z,y)$ and so $|\mathcal{D}_t(x,y)\cap W|\ge |W_i|\ge k$. If $y=u_l\in N_G(u_i)$, then for any $z\in W_l$ we have $d_t(z,y)\le 2<3\le d_t(z,x)$ and so $|\mathcal{D}_t(x,y)\cap W|\ge |W_l|\ge k$.

According to the three cases above we conclude that $W$  is a $(k,t)$-metric generator  for $G\odot {\cal H}$ and, as a consequence,
$$\dim_k^t(G\odot {\cal H})\le |W|= \sum_{i=1}^n|W_i|= \sum_{i=1}^n\dim_k^2(H_i),$$
as required. Therefore, if for every $H_i\in \mathcal{H}$ there exists a $(k,2)$-metric generator, then $\dim_k^t(G\odot {\cal H})= \sum_{i=1}^n\dim_k^2(H_i)$. On the other hand,
 if there exists $H_i\in \mathcal{H}$ such that no subset of $V_i$ is a $(k,2)$-metric generator for $H_i$, then no subset of $V$ is a $(k,t)$-metric generator for $G\odot {\cal H}$, so that  $\dim_k^2(H_i)=+\infty$ and $\dim_k^t(G\odot {\cal H})=+\infty$, which implies that $\dim_k^t(G\odot {\cal H})=\sum_{i=1}^n\dim_k^2(H_i)$.
\end{proof}

\section{Computational complexity}\label{SectionComputational complexity}

We next deal with the following decision problem, for which we prove its NP-completeness for the case in which $k$ is an odd integer.

$$\begin{tabular}{|l|}
  \hline
  \mbox{$(k,t)$-METRIC DIMENSION PROBLEM}\\
  \mbox{INSTANCE: A $(k',t)$-metric dimensional graph $G$ of order $n\ge 3$, integers $k,r$}\\
  \hspace*{2.4cm}\mbox{ with $2\le k\le t$ and  such that $2\le k\le k'$.}\\
  \mbox{QUESTION: Is $\dim_k^t(G)\le r$?}\\
  \hline
\end{tabular}$$\\

In order to study the problem above, we analyze its relationship with the two decision problems which are stated at next. We show that the first one of them is NP-complete, and for the second one, it is already known as an NP-complete problem from \cite{RV-F-2013}.

$$\begin{tabular}{|l|}
  \hline
  \mbox{$(k,2)$-METRIC DIMENSION PROBLEM}\\
  \mbox{INSTANCE: A $(k',2)$-metric dimensional graph $G$ of order $n\ge 3$ and an integer $k$}\\
  \hspace*{2.4cm} \mbox{such that $2\le k\le k'$.}\\
  \mbox{QUESTION: Is $\dim_k^2(G)\le r$?}\\
  \hline
\end{tabular}$$

$$\begin{tabular}{|l|}
  \hline
  \mbox{$(1,2)$-METRIC DIMENSION PROBLEM}\\
  \mbox{INSTANCE: A connected graph $G$ of order $n\ge 3$.}\\
  \mbox{QUESTION: Is $\dim_1^2(G)\le r$?}\\
  \hline
\end{tabular}$$\\

Since the problem above ($(1,2)$-METRIC DIMENSION PROBLEM) was proved to be NP-complete in \cite{RV-F-2013}, we shall proceed as follows. We first make a reduction from the $(1,2)$-METRIC DIMENSION PROBLEM to the $(k,2)$-METRIC DIMENSION PROBLEM, which shows the NP-completeness of this last mentioned problem. We further make a reduction from the $(k,2)$-METRIC DIMENSION PROBLEM to the $(k,t)$-METRIC DIMENSION PROBLEM, $t\ge 3$, showing the NP-completeness of our main problem.

We first consider a family of graphs $H_k$ constructed in the following way. Let $k$ be an odd integer and let $r=\frac{k-1}{2}$.
\begin{enumerate}
\item We begin with four vertices $a,b,c,d$ such that $a\sim b$ and $c\sim d$.
\item Add $r$ vertices $a_i$, $r$ vertices $c_i$, $k-1$ vertices $b_i$ and $k-1$ vertices $d_i$.
\item Add edges $aa_i$, $ba_i$, $cc_i$ and $dc_i$ with $i\in\{1,\ldots,r\}$ and edges $bb_j$ $dd_j$ with $j\in \{1,\ldots,k-1\}$.
\item Add edges $ab_{k-1}$ and $cd_{k-1}$.
\item For each vertex $w_l$ such that $w\in \{a,b,c,d\}$ and ($l\in \{1,\ldots,r\}$ or $l\in \{1,\ldots,k-1\}$ accordingly), add $r+1$ vertices $w_{l,q}$, $q\in \{1,\ldots,r+1\}$, and edges $w_lw_{l,q}$ for every $q\in \{1,\ldots,r+1\}$. For each $w_l$, we shall denote by $W_l$ (namely $A_l$, $B_l$, $C_l$ or $D_l$) the set of such vertices adjacent to $w_l$.
\item With all the vertices of the sets $A_l$'s and the sets $B_l$'s, construct a complete multipartite graph $K_{r+1,\ldots,r+1}$ having $k+r-1$ partite sets each of cardinality $r+1$ (each partite set given by a set $A_l$ or by a set $B_l$).
\item Similarly, proceed with the sets $C_l$'s and the sets $D_l$'s to obtain another complete multipartite graph $K_{r+1,\ldots,r+1}$.
\item For every $i\in \{1,\ldots,r\}$ and $j\in \{1,\ldots,r+1\}$, add the edges $a_{i,j}c_{q,j}$ with $q\in\{1,\ldots,r\}$.
\item For every $i\in \{1,\ldots,r\}$ and $j\in \{1,\ldots,r+1\}$, add the edges $b_{i,j}d_{q,j}$ with $q\in\{1,\ldots,r\}$.
\item For every $i\in \{r+1,\ldots,2r\}$ and $j\in \{1,\ldots,r+1\}$, add the edges $b_{i,j}d_{q,j}$ with $q\in\{r+1,\ldots,2r\}$.
\end{enumerate}
 Figure \ref{graph-H-5} shows an sketch of the graph $H_5$.

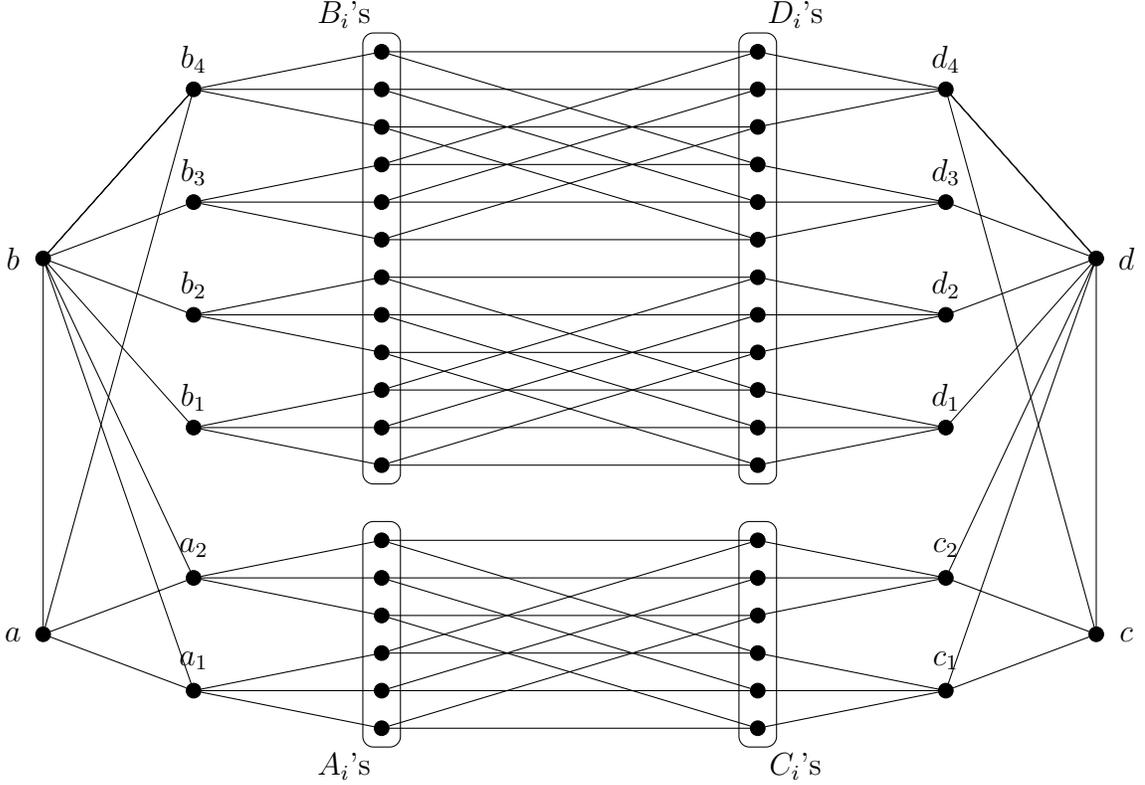
\begin{figure}[ht]
\centering
\begin{tikzpicture}[scale=.5, transform shape]
\node [draw, shape=circle,fill=black] (a) at  (-14,2.5) {};
\node [scale=2] at (-14.8,2.5) {$a$};
\node [draw, shape=circle,fill=black] (b) at  (-14,12.5) {};
\node [scale=2] at (-14.8,12.5) {$b$};
\node [draw, shape=circle,fill=black] (c) at  (14,2.5) {};
\node [scale=2] at (14.8,2.5) {$c$};
\node [draw, shape=circle,fill=black] (d) at  (14,12.5) {};
\node [scale=2] at (14.8,12.5) {$d$};

\node [draw, shape=circle,fill=black] (a1) at  (-10,1) {};
\node [scale=2] at (-10,1.8) {$a_1$};
\node [draw, shape=circle,fill=black] (a2) at  (-10,4) {};
\node [scale=2] at (-10,4.8) {$a_2$};
\node [draw, shape=circle,fill=black] (b1) at  (-10,8) {};
\node [scale=2] at (-10,8.8) {$b_1$};
\node [draw, shape=circle,fill=black] (b2) at  (-10,11) {};
\node [scale=2] at (-10,11.8) {$b_2$};
\node [draw, shape=circle,fill=black] (b3) at  (-10,14) {};
\node [scale=2] at (-10,14.8) {$b_3$};
\node [draw, shape=circle,fill=black] (b4) at  (-10,17) {};
\node [scale=2] at (-10,17.8) {$b_4$};

\node [draw, shape=circle,fill=black] (c1) at  (10,1) {};
\node [scale=2] at (10,1.8) {$c_1$};
\node [draw, shape=circle,fill=black] (c2) at  (10,4) {};
\node [scale=2] at (10,4.8) {$c_2$};
\node [draw, shape=circle,fill=black] (d1) at  (10,8) {};
\node [scale=2] at (10,8.8) {$d_1$};
\node [draw, shape=circle,fill=black] (d2) at  (10,11) {};
\node [scale=2] at (10,11.8) {$d_2$};
\node [draw, shape=circle,fill=black] (d3) at  (10,14) {};
\node [scale=2] at (10,14.8) {$d_3$};
\node [draw, shape=circle,fill=black] (d4) at  (10,17) {};
\node [scale=2] at (10,17.8) {$d_4$};

\node [draw, shape=circle,fill=black] (a11) at  (-5,0) {};
\node [draw, shape=circle,fill=black] (a12) at  (-5,1) {};
\node [draw, shape=circle,fill=black] (a13) at  (-5,2) {};
\node [draw, shape=circle,fill=black] (a21) at  (-5,3) {};
\node [draw, shape=circle,fill=black] (a22) at  (-5,4) {};
\node [draw, shape=circle,fill=black] (a23) at  (-5,5) {};
\node [draw, shape=circle,fill=black] (b11) at  (-5,7) {};
\node [draw, shape=circle,fill=black] (b12) at  (-5,8) {};
\node [draw, shape=circle,fill=black] (b13) at  (-5,9) {};
\node [draw, shape=circle,fill=black] (b21) at  (-5,10) {};
\node [draw, shape=circle,fill=black] (b22) at  (-5,11) {};
\node [draw, shape=circle,fill=black] (b23) at  (-5,12) {};
\node [draw, shape=circle,fill=black] (b31) at  (-5,13) {};
\node [draw, shape=circle,fill=black] (b32) at  (-5,14) {};
\node [draw, shape=circle,fill=black] (b33) at  (-5,15) {};
\node [draw, shape=circle,fill=black] (b41) at  (-5,16) {};
\node [draw, shape=circle,fill=black] (b42) at  (-5,17) {};
\node [draw, shape=circle,fill=black] (b43) at  (-5,18) {};

\node [draw, shape=circle,fill=black] (c11) at  (5,0) {};
\node [draw, shape=circle,fill=black] (c12) at  (5,1) {};
\node [draw, shape=circle,fill=black] (c13) at  (5,2) {};
\node [draw, shape=circle,fill=black] (c21) at  (5,3) {};
\node [draw, shape=circle,fill=black] (c22) at  (5,4) {};
\node [draw, shape=circle,fill=black] (c23) at  (5,5) {};
\node [draw, shape=circle,fill=black] (d11) at  (5,7) {};
\node [draw, shape=circle,fill=black] (d12) at  (5,8) {};
\node [draw, shape=circle,fill=black] (d13) at  (5,9) {};
\node [draw, shape=circle,fill=black] (d21) at  (5,10) {};
\node [draw, shape=circle,fill=black] (d22) at  (5,11) {};
\node [draw, shape=circle,fill=black] (d23) at  (5,12) {};
\node [draw, shape=circle,fill=black] (d31) at  (5,13) {};
\node [draw, shape=circle,fill=black] (d32) at  (5,14) {};
\node [draw, shape=circle,fill=black] (d33) at  (5,15) {};
\node [draw, shape=circle,fill=black] (d41) at  (5,16) {};
\node [draw, shape=circle,fill=black] (d42) at  (5,17) {};
\node [draw, shape=circle,fill=black] (d43) at  (5,18) {};

\draw[rounded corners] (-5.5,-0.5) rectangle (-4.5,5.5);
\node [scale=2] at (-6,-1) {$A_i$'s};
\draw[rounded corners] (-5.5,6.5) rectangle (-4.5,18.5);
\node [scale=2] at (-6,19) {$B_i$'s};
\draw[rounded corners] (4.5,-0.5) rectangle (5.5,5.5);
\node [scale=2] at (6,-1) {$C_i$'s};
\draw[rounded corners] (4.5,6.5) rectangle (5.5,18.5);
\node [scale=2] at (6,19) {$D_i$'s};

\draw(a)--(b)--(a1)--(a)--(a2)--(b)--(b1);
\draw(a)--(b4);
\draw(b2)--(b)--(b3);
\draw(b)--(b4)--(b);
\draw(b43)--(b4)--(b42);
\draw(b41)--(b4);
\draw(b33)--(b3)--(b32);
\draw(b31)--(b3);
\draw(b23)--(b2)--(b22);
\draw(b21)--(b2);
\draw(b13)--(b1)--(b12);
\draw(b11)--(b1);
\draw(a23)--(a2)--(a22);
\draw(a21)--(a2);
\draw(a13)--(a1)--(a12);
\draw(a11)--(a1);
\draw(c)--(d)--(c1)--(c)--(c2)--(d)--(d1);
\draw(d4)--(c);
\draw(d2)--(d)--(d3);
\draw(d)--(d4)--(d);
\draw(d43)--(d4)--(d42);
\draw(d41)--(d4);
\draw(d33)--(d3)--(d32);
\draw(d31)--(d3);
\draw(d23)--(d2)--(d22);
\draw(d21)--(d2);
\draw(d13)--(d1)--(d12);
\draw(d11)--(d1);
\draw(c23)--(c2)--(c22);
\draw(c21)--(c2);
\draw(c13)--(c1)--(c12);
\draw(c11)--(c1);
\draw(a11)--(c21)--(a21)--(c11)--(a11);
\draw(a12)--(c22)--(a22)--(c12)--(a12);
\draw(a13)--(c23)--(a23)--(c13)--(a13);
\draw(b11)--(d21)--(b21)--(d11)--(b11);
\draw(b12)--(d22)--(b22)--(d12)--(b12);
\draw(b13)--(d23)--(b23)--(d13)--(b13);

\draw(b31)--(d41)--(b41)--(d31)--(b31);
\draw(b32)--(d42)--(b42)--(d32)--(b32);
\draw(b33)--(d43)--(b43)--(d33)--(b33);

\end{tikzpicture}
\caption{An sketch of the graph $H_5$. The edges of the two complete bipartite subgraphs induced by the $A_i$'s and $B_i$'s, and by the $C_i$'s and $D_i$'s, have not been drawn.}\label{graph-H-5}
\end{figure}

We can easily check the following properties of $H_k$.

\begin{remark}\label{prop-graph-H_k}
For any graph $H_k$ the following follows.
\begin{enumerate}[{\rm (i)}]
\item $H_k$ has order $R=\frac{3k^2+6k-1}{2}$.
\item The degrees $\delta(v)$ of vertices $v\in V(H_k)$ are: $\delta(a)=\delta(c)=r+2$, $\delta(b)=\delta(d)=3r+1$, $\delta(a_i)=\delta(c_i)=r+3$ $($with $i\in\{1,\ldots,r\})$, $\delta(b_i)=\delta(d_i)=r+2$ $($with $i\in\{1,\ldots,k-2\})$, $\delta(b_{k-1})=\delta(d_{k-1})=r+3$,  $\delta(a_{i,j})=\delta(c_{i,j})=3r(r+1)$ $($with $i\in\{1,\ldots,r\}$ and $j\in\{1,\ldots,r+1\})$, and $\delta(b_{i,j})=\delta(d_{i,j})=3r(r+1)$ $($with $i\in\{1,\ldots,k-1\}$ and $j\in\{1,\ldots,r+1\})$.
\end{enumerate}
\end{remark}
We now study the $(k,2)$-metric dimension of the graph $H_k$ for any odd integer $k$.

\begin{remark}\label{dim_k-2-H_k}
For any graph $H_k$ with $k$ being an odd integer, $\dim_k^2(H_k)=R-6$.
\end{remark}

\begin{proof}
Let $S$ be a $(k,2)$-metric basis of $H_k$. Notice that $|\mathcal{D}_2(a,b)|=|\{a,b,b_1,\ldots,b_{k-2}\}|=k$ and $|\mathcal{D}_2(c,d)|=|\{c,d,d_1,\ldots,d_{k-2}\}|=k$, which implies that $\{a,b,b_1,\ldots,b_{k-2}\}\subset S$ and $\{c,d,d_1,\ldots,d_{k-2}\}\subset S$. On the other hand, notice that for any two vertices $b_{i,j},b_{i,q}$ with $i\in \{1,\ldots,k-1\}$ and $j,q\in\{1,\ldots,r+1\}$, it follows $|\mathcal{D}_2(b_{i,j},b_{i,q})|=k+1$. In this sense, if $|\left(\bigcup_{i=1}^r B_i\cup D_i\right)-S|\ge 2$, then there are at least two vertices $b_{i,j},b_{i,q}$ or at least two vertices $d_{i,j},d_{i,q}$ for some $i\in \{1,\ldots,k-1\}$ and $j,q\in\{1,\ldots,r+1\}$ for which $|\mathcal{D}_2(b_{i,j},b_{i,q})|\le k-1$ or $|\mathcal{D}_2(d_{i,j},d_{i,q})|\le k-1$, respectively, and this is not possible. Thus $|\left(\bigcup_{i=1}^r B_i\cup D_i\right)-S|\le 1$. Similarly, we observe that $|\left(\bigcup_{i=r+1}^{2r} B_i\cup D_i\right)-S|\le 1$ and $|\left(\bigcup_{i=1}^r A_i\cup C_i\right)-S|\le 1$. Consequently, at most three vertices of the sets $A_l$'s, $B_l$'s, $C_l$'s and $D_l$'s do not belong to $S$.

Now, we note that $|\mathcal{D}_2(a,a_i)|=k+2$, $i\in \{1,\ldots,r\}$, which means that at most two vertices of the set $\{a,b_{k-1},a_1,\ldots,a_r\}\subseteq \mathcal{D}_2(a,a_i)$ do not belong to $S$. Similarly, at most two vertices of the set $\{c,d_{k-1},c_1,\ldots,c_r\}$ do not belong to $S$. If exactly two vertices of the set $\{a,b_{k-1},a_1,\ldots,a_r\}$ do not belong to $S$, then $|\left(\bigcup_{i=1}^r A_i\right)-S|=0$, otherwise there is a vertex $a_j$ for which $|\mathcal{D}_2(a,a_j)|<k$. A similar reasoning can be deduced for the set of vertices $\{c,d,d_{k-1},c_1,\ldots,c_r\}$. Consequently, we have either one of the following situations.
\begin{itemize}
\item Exactly two vertices of the set $\{a,b_{k-1},a_1,\ldots,a_r\}$ do not belong to $S$, $|\left(\bigcup_{i=1}^r A_i\right)-S|=0$, $|\left(\bigcup_{i=1}^r C_i\right)-S|=1$ and at most one vertex of the set $\{c,d,d_{k-1},c_1,\ldots,c_r\}$ do not belong to $S$, or
\item at most one vertex of the set $\{a,b,b_{k-1},a_1,\ldots,a_r\}$ do not belong to $S$, $|\left(\bigcup_{i=1}^r A_i\right)-S|=1$, $|\left(\bigcup_{i=1}^r C_i\right)-S|=0$ and exactly two vertices of the set $\{c,d_{k-1},c_1,\ldots,c_r\}$ do not belong to $S$, or
\item exactly two vertices of the set $\{a,b_{k-1},a_1,\ldots,a_r\}$ do not belong to $S$, $|\left(\bigcup_{i=1}^r A_i\cup C_i\right)-S|=0$ and exactly two vertices of the set $\{c,d_{k-1},c_1,\ldots,c_r\}$ do not belong to $S$.
\end{itemize}
Notice that it cannot happen: at most one vertex of the set $\{a,b,b_{k-1},a_1,\ldots,a_r\}$ do not belong to $S$, $|\left(\bigcup_{i=1}^r A_i\right)-S|=1$, $|\left(\bigcup_{i=1}^r C_i\right)-S|=1$ and at most one vertex of the set $\{c,d_{k-1},c_1,\ldots,c_r\}$ do not belong to $S$, since in such case $|\left(\bigcup_{i=1}^r A_i\cup C_i\right)-S|=2$, which is not possible, as stated before.

In any of the situations previously described, we can deduce that at most four vertices in the set $\{a,b_{k-1},a_1,\ldots,a_r\}\cup \{c,d_{k-1},c_1,\ldots,c_r\}\cup \left(\bigcup_{i=1}^r A_i\cup C_i\right)$ do not belong to $S$. Finally, since at most two vertices of the sets $B_l$'s and $D_l$'s do not belong to $S$, we obtain that at most six vertices of $H_k$ do not belong to $S$, or equivalently, $\dim_k^2(H_k)=|S|\ge R-6$.

Now, let $S'=V(H_k)-\{a_1,c_1,b_{k-1},d_{k-1},d_{1,1},d_{r+1,1}\}$. We will show that $S'$ is a $(k,2)$-metric generator for $H_k$. To this end, we consider the following table containing lower bounds for the value $|\mathcal{D}_2(x,y)\cap S'|$ for some pairs of vertices $x,y\in V(H_k)$ (in some cases the bounds are not the best ones, but enough to prove what we need).

\begin{center}
\begin{tabular}{|c|c|c|c|c|c|c|}
  \hline
   & $a$ & $b$ & $a_f$ & $b_g$ & $a_{i,j}$ & $b_{l,q}$ \\ \hline
  $a$ & - & $k$ & $k$ & $k$ & $k(r+1)$ & $k(r+1)$ \\ \hline
  $b$ & $k$ & - & $k-1+r$ & $k+1$ & $k(r+1)$ & $k(r+1)$ \\ \hline
  $a_f$ & $k$ & $k-1+r$ & - & $k$ & $k(r+1)$ & $k(r+1)$ \\ \hline
  $b_g$ & $k$ & $k+1$ & $k$ & - & $k(r+1)$ & $k(r+1)$ \\ \hline
  $a_{i,j}$ & $k(r+1)$ & $k(r+1)$ & $k(r+1)$ & $k(r+1)$ & - & $k+1$ \\ \hline
  $b_{l,q}$ & $k(r+1)$ & $k(r+1)$ & $k(r+1)$ & $k(r+1)$ & $k+1$ & - \\
  \hline
\end{tabular}
\end{center}
On the other hand, $|\mathcal{D}_2(a_i,a_j)\cap S'|\ge k+2$, $|\mathcal{D}_2(b_i,b_j)\cap S'|\ge k+1$, $|\mathcal{D}_2(a_{i,j},a_{i,q})\cap S'|\ge k$, $|\mathcal{D}_2(a_{i,j},a_{l,q})\cap S'|\ge k+2$ ($l\ne i$), $|\mathcal{D}_2(b_{i,j},b_{i,q})\cap S'|\ge k$ and $|\mathcal{D}_2(b_{i,j},b_{l,q})\cap S'|\ge k+3$ ($l\ne i$).

A similar table and similar results as above can be done for vertices of type $c,d$. So, it remains only those pairs of vertices such that one of them is of type $a,b$ and the other one of type $c,d$. For instance, $|\mathcal{D}_2(a,c)\cap S'|\ge k+1$, $|\mathcal{D}_2(a,d)\cap S'|\ge k+r$, $|\mathcal{D}_2(b,c)\cap S'|\ge k+r$, $|\mathcal{D}_2(b,d)\cap S'|\ge 2k+2r-2$. The remaining cases are left to the reader.

As a consequence of the situations described above, we have that $S'$ is a $(k,2)$-metric generator for $H_k$. Therefore, $\dim_k^2(H_k)\le R-6$ and the equality follows.
\end{proof}

In order to continue our exposition, we assume some notations. According to the definition of corona product graphs $G\odot H$ (whether all the graphs in the family $\mathcal{H}$ are isomorphic to a graph $H$) given in Subsection \ref{coronas}, from now on we will denote by $U=\{u_1,u_2,\ldots, u_{n}\}$ the set of vertices of $G$ and by $V_i$ the vertices of $H_i$, $i\in\{1,\ldots,n\}$. Moreover, the vertex set of $G\odot H$ is given by $\{u_1,u_2,\ldots, u_{n}\}\cup \{(u_i,v_j)\;:\;u_i\in U,\;\;v_j\in V_j\}$. Now, given any connected graph $G$ and an odd integer $k$, we shall construct a graph $G'$ in the following way.

\begin{enumerate}
\item Consider the corona product graph $G\odot N_r$ where $N_r$ is the empty graph on $r$ vertices (recall $r=\frac{k-1}{2}$).
\item For any vertex $(u_i,v_j)\in V(G\odot N_r)$ such that $u_i\in U$ and $v_j\in V_j$, add a copy of the graph $H_k$ and identify the vertex $(u_i,v_j)$ of $G\odot N_r$ with the vertex $b_1$ in the copy of $H_k$.
\end{enumerate}

We are now able to prove that the $(k,2)$-METRIC DIMENSION PROBLEM is NP-complete, for  $k$   odd.

\begin{theorem}
For any odd integer $k$, the $(k,2)$-METRIC DIMENSION PROBLEM is NP-complete.
\end{theorem}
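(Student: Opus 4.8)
The plan is to reduce the $(1,2)$-METRIC DIMENSION PROBLEM, shown NP-complete in \cite{RV-F-2013}, to the $(k,2)$-METRIC DIMENSION PROBLEM; membership of the latter in NP is routine, since given a subset $S\subseteq V(G)$ with $|S|\le\rho$ one checks, for each of the $\binom{n}{2}$ pairs $x,y$, whether $|\{z\in S:\ d_2(z,x)\ne d_2(z,y)\}|\ge k$, and $d_2(z,x)\in\{0,1,2\}$ is read off the adjacency matrix, so the verification runs in $O(n^3)$ time; legitimacy of an instance (that $G$ is $(k',2)$-metric dimensional with $k\le k'$, equivalently $\mathfrak{d}_2(G)\ge k$) can likewise be checked in $O(n^3)$ time by Theorem \ref{theokdimensionalG}.

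Given an instance of the $(1,2)$-METRIC DIMENSION PROBLEM, namely a connected graph $G$ of order $n\ge 3$ and a threshold $\rho$, let $G'$ be the graph built by the construction preceding the theorem: form the corona product $G\odot N_r$ with $r=\frac{k-1}{2}$, and to each vertex $(u_i,v_j)$ of it attach a fresh copy of $H_k$, identifying $(u_i,v_j)$ with the vertex $b_1$ of that copy. Since $H_k$ has the fixed order $R=\frac{3k^2+6k-1}{2}$ (Remark \ref{prop-graph-H_k}), $G'$ has $n+nr+nr(R-1)$ vertices and is computed from the instance in polynomial time. One must also check that $G'$ is a legitimate instance, that is, $\mathfrak{d}_2(G')\ge k$. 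A pair of vertices lying inside one copy of $H_k$ is distinguished by at least $k$ vertices, since $H_k$ is at least $(k,2)$-metric dimensional (by Remark \ref{dim_k-2-H_k}, $\dim_k^2(H_k)=R-6<\infty$) and identifying $b_1$ with a pendant vertex neither shortens a distance inside a copy nor deletes a distinguisher of such a pair; every remaining pair type ($u_i$ versus $u_j$; $u_i$ versus a copy vertex; pendant versus pendant; vertices in two distinct copies; and so on) is handled by direct computations with $d_2$, using repeatedly that $2r=k-1$.

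The heart of the reduction is the identity
$$\dim_k^2(G')=\dim_1^2(G)+nr(R-6),$$
from which, setting $C=nr(R-6)$ (computable in polynomial time from the instance), we get $\dim_1^2(G)\le\rho$ if and only if $\dim_k^2(G')\le\rho+C$; together with membership in NP, the theorem follows. To prove $\dim_k^2(G')\le\dim_1^2(G)+C$, take a minimum $1$-metric (adjacency) generator $W$ of $G$ and adjoin, inside each copy of $H_k$, the $(R-6)$-element $(k,2)$-metric basis of that copy exhibited in the proof of Remark \ref{dim_k-2-H_k} (so in particular all $nr$ pendant vertices $(u_i,v_j)=b_1$ are included); a case analysis on the type of the pair to be distinguished shows that this set is a $(k,2)$-metric generator for $G'$. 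To prove the reverse inequality, let $B$ be any $(k,2)$-metric basis of $G'$. Inside each copy the pair $(a,b)$ of that copy is distinguished by exactly the $k$ vertices $\{a,b,b_1,\dots,b_{k-2}\}$ of the copy, and by no vertex of $G'$ outside it, so $B$ must contain $b_1=(u_i,v_j)$; hence $B$ contains all pendant vertices, and re-running the counting argument of Remark \ref{dim_k-2-H_k} inside $G'$ forces $B$ to contain at least $R-6$ vertices of each of the $nr$ pairwise vertex-disjoint copies. Finally, for distinct $u_i,u_j\in V(G)$ one checks that $\mathcal D_2^{G'}(u_i,u_j)$ is the disjoint union of $\mathcal D_2^{G}(u_i,u_j)$ with the $2r=k-1$ pendant vertices attached to $u_i$ and $u_j$; since these $k-1$ pendants already lie in $B$, a $k$-th distinguisher of $u_i,u_j$ must come from $\mathcal D_2^{G}(u_i,u_j)$, so $B\cap V(G)$ is a $1$-metric generator of $G$, giving $|B\cap V(G)|\ge\dim_1^2(G)$; summing over the disjoint pieces of $V(G')$ yields $|B|\ge\dim_1^2(G)+C$.

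The main obstacle I anticipate is the lower bound $\dim_k^2(G')\ge\dim_1^2(G)+C$, more precisely the re-examination of the counting argument of Remark \ref{dim_k-2-H_k} inside $G'$: one has to confirm that adding the pendant edge at $b_1$ creates no new distinguisher, from outside a copy, for any of the tight pairs used there (the pairs $(a,b)$, $(c,d)$, $(a,a_i)$, $(b_{i,j},b_{i,q})$ and their $(c,d)$-analogues) and destroys no distinguisher internal to a copy, so that the conclusion ``at most six vertices of each copy are omitted'' survives verbatim. The verifications for the numerous mixed pair types, both in the upper-bound construction and in checking that $G'$ is a legitimate instance, are lengthy but routine.
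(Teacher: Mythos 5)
Your proposal is correct and follows essentially the same route as the paper: the same gadget $G'$ (corona $G\odot N_r$ with a copy of $H_k$ glued at each pendant via $b_1$), the same key identity $\dim_k^2(G')=\dim_1^2(G)+\frac{n(k-1)}{2}\dim_k^2(H_k)$, the same upper-bound construction from a $(1,2)$-basis of $G$ plus the explicit $(R-6)$-element bases of the copies, and the same lower-bound argument forcing $b_1$ into any basis via the tight pair $(a,b)$ and then recovering a $(1,2)$-generator of $G$ from the fact that the $k-1$ pendants supply only $k-1$ distinguishers for pairs in $V(G)$. The point you flag as the main obstacle (re-verifying the counting argument of the remark inside $G'$) is exactly the step the paper also relies on, and your additional check that $G'$ is a legitimate instance is a minor, correct refinement.
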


\begin{proof}
It is not difficult to observe that the problem is in NP, since verifying that a given set is a $(k,2)$-metric generator can be done in polynomial time. Let $G$ be any non-trivial graph. We consider the graph $G'$ as described above and will prove that
\begin{equation}\label{EqComplexity}
\dim_k^2(G')=\dim_1^2(G)+\frac{n(k-1)}{2}\dim_k^2(H_k) .
\end{equation}
 Let $S_k$ be a $(k,2)$-metric basis for $H_k$ as described in the second part of the proof of Remark \ref{dim_k-2-H_k}.  Let $S_G$ be any $(1,2)$-metric basis for $G$ and let $S_H$ be the union of the sets $S_k$ corresponding to the copies  of $H_k$. In order to show that  $S=S_G\cup S_H$ is a $(k,2)$-metric generator for $G'$, we analyze the following cases for any pair of  different   vertices $x,y$ of $G'$.\\

\noindent Case 1. $x,y\in V(G)$. Since every vertex in $V(G)$ is adjacent to $r=\frac{k-1}{2}$ vertices of $S$ and also $|\mathcal{D}_{G,2}(x,y)|\ge 1$, it clearly follows that $|\mathcal{D}_{G',2}(x,y)\cap S|\ge 2r+1=k$.\\

\noindent Case 2. $x\in V(G)$ and $y\notin V(G)$. According to the degrees of vertices of $H_k$ (see Remark \ref{prop-graph-H_k} (ii)) and the structure of $S_k$, we notice that if $y\not\in \{a,c\}$, then $|N[y]\cap S|\ge r+2$ and also, by the construction of $G'$, $|(N[x]\cap S)-N(y)|\ge r-1$.  Now, if $y\in \{a,c\}$,   then $|N[y]\cap S|\ge r+1$ and  $|(N[x]\cap S)-N(y)|\ge r$. Since in both cases  $(N[y]\cap S)\cap ((N[x]\cap S)-N(y))=\emptyset$, it follows $|\mathcal{D}_{G',2}(x,y)\cap S|\ge 2r+1 = k$.\\

\noindent Case 3. $x,y\notin V(G)$. If $x,y$ belong to two different copies of $H_k$,  then  $|N[x]\cap S|\ge r+1$, $|N[y]\cap S|\ge r+1$ and $(N[y]\cap S)\cap (N[x]\cap S)=\emptyset$. Thus, $|\mathcal{D}_{G',2}(x,y)\cap S|\ge 2r+2 > k$. Now, if $x,y$ belong to the same copy of $H_k$, then $|\mathcal{D}_{G',2}(x,y)\cap S|\ge |\mathcal{D}_{G',2}(x,y)\cap S_k|\ge k$. \\

According to the cases above, it clearly follows that $S$ is a $(k,2)$-metric generator for $G'$ and so, $\dim_k^2(G')\le \dim_1^2(G)+\frac{n(k-1)}{2}\dim_k^2(H_k)$.\\

Now, consider a $(k,2)$-metric basis $S'$ of $G'$. Let $S''=S'\cap V(G)$ and let $u,v\in V(G)$.  For the vertices  $a,b,b_1,b_2,\ldots,b_{k-2}$, corresponding to  a copy of $H_k$, we have $\mathcal{D}_{G',2}(a,b)=\{a,b,b_1,b_2,\ldots,b_{k-2}\}$ and, as a consequence, the vertex $b_1$ corresponding to each copy of $H_k$ must belong to $S'$. Hence, $|\mathcal{D}_{G',2}(u,v)\cap (S'-S'')|=2r=k-1$, which implies that $S''$ must be a $(1,2)$-metric generator for $G$.
Furthermore, as we have shown in the proof of Remark \ref{dim_k-2-H_k},
to ensure that a set $D\subseteq V(H_k)$ satisfies
$ \left| D\cap \mathcal{D}_{H_k,2}(x,y)\right |\ge k$, for any pair of vertices $x,y\in V(H_k)-\{b_1\}$, the cardinality of  $D$ must be greater than or equal to
$R-6=\dim_k^2(H_k)$, which implies that $|S'\cap V(H_k)|\ge \dim_k^2(H_k)$ for all copies of $H_k$.
As a consequence,
$$\dim_k^2(G')=|S'\cap V(G')|+\sum_{i=1}^{\frac{n(k-1)}{2}}|S'\cap V(H_k)|
\ge \dim_1^2(G)+\frac{n(k-1)}{2}\dim_k^2(H_k)$$
 and  \eqref{EqComplexity} follows.
 The reduction from the $(1,2)$-METRIC DIMENSION PROBLEM to the $(k,2)$-METRIC DIMENSION PROBLEM is deduced by Remark \ref{dim_k-2-H_k}  and \eqref{EqComplexity}.
\end{proof}

Our next step is the proof of the NP-completeness of our main problem: the $(k,t)$-METRIC DIMENSION PROBLEM. To this end, we shall use a result already presented in Subsection \ref{coronas}.

\begin{theorem}\label{complex-kt-dim-k-odd}
For any odd integer $k$ and any integer $t\ge 2$, the $(k,t)$-METRIC DIMENSION PROBLEM is NP-complete.
\end{theorem}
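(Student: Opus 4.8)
The plan is to prove membership in NP and then, separately for $t=2$ and for $t\ge 3$, reduce from a problem already known to be NP-complete. Membership in NP is routine: from a candidate vertex set $S$ one first builds the $d_t$-distance matrix of $G$ via the Floyd-Warshall algorithm (cubic time), then checks for each of the $\binom{n}{2}$ vertex pairs $x,y$ whether $|\mathcal{D}_t(x,y)\cap S|\ge k$, and finally whether $|S|\le r$; all of this is polynomial. For $t=2$ the statement is exactly the NP-completeness of the $(k,2)$-METRIC DIMENSION PROBLEM, which has been established just above, so it remains to treat $t\ge 3$.

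For $t\ge 3$ I would reduce from the $(k,2)$-METRIC DIMENSION PROBLEM, using Theorem~\ref{kt-dim-corona} as the bridge. Given an instance of that problem---a $(k',2)$-metric dimensional graph $\Gamma$ of order $n\ge 3$ with $2\le k\le k'$, together with an integer $r$---map it to the pair $(G^\ast,2r)$, where $G^\ast=K_2\odot\{\Gamma,\Gamma\}$. The graph $G^\ast$ is connected (a corona over a connected base is connected), has order $2n+2$, and can be built from $\Gamma$ in polynomial time. Applying Theorem~\ref{kt-dim-corona} with the connected base graph $K_2$ of order $2$ and the family $\{\Gamma,\Gamma\}$ of two nontrivial graphs gives
$$\dim_k^t(G^\ast)=2\,\dim_k^2(\Gamma),$$
and $\dim_k^2(\Gamma)<+\infty$ because $\Gamma$ is $(k',2)$-metric dimensional with $k'\ge k$. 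Hence $\dim_k^2(\Gamma)\le r$ if and only if $\dim_k^t(G^\ast)\le 2r$, so $(\Gamma,r)\mapsto(G^\ast,2r)$ is the desired polynomial reduction---provided $G^\ast$ is a legitimate instance of the $(k,t)$-METRIC DIMENSION PROBLEM. (The restriction to odd $k$ is inherited from the source problem, where oddness is what makes the gadget $H_k$ available.)

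The only step that requires real work, and the place where I expect the argument to get technical, is checking that $G^\ast$ is a $(k'',t)$-metric dimensional graph with $k''\ge k$; by Theorem~\ref{theokdimensionalG} this amounts to showing $\mathfrak{d}_t(G^\ast)\ge k$. Write $V_1,V_2$ for the two copies of $\Gamma$ inside $G^\ast$ and $u_1,u_2$ for the vertices of $K_2$. For a pair $x,y$ lying in a single copy $V_i$, the distances of $G^\ast$ restricted to $V_i$ agree with $d_2$ in $\Gamma$ and no vertex outside $V_i$ distinguishes the pair, so $|\mathcal{D}_t(x,y)|=|\mathcal{D}_{\Gamma,2}(x,y)|\ge\mathfrak{d}_2(\Gamma)=k'\ge k$. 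For every remaining kind of pair---$\{u_1,u_2\}$; a vertex of $V_i$ against $u_i$; a vertex of $V_i$ against $u_j$ with $j\ne i$; a vertex of $V_1$ against a vertex of $V_2$---a short computation of distances in the corona that uses $t\ge 3$, parallel to the case analysis in the proof of Theorem~\ref{kt-dim-corona}, shows that one entire copy $V_i$ distinguishes the pair, so $|\mathcal{D}_t(x,y)|\ge n\ge k'+1>k$, the inequality $n\ge k'+1$ coming from Remark~\ref{remarkKMetricG}. Hence $\mathfrak{d}_t(G^\ast)\ge k$, $G^\ast$ is a valid instance, and the reduction is complete; together with the $t=2$ case this proves the NP-completeness of the $(k,t)$-METRIC DIMENSION PROBLEM for every odd $k$. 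In short, the reduction idea is handed to us for free by Theorem~\ref{kt-dim-corona}, and the genuine difficulty is the purely combinatorial bookkeeping needed to certify the instance restriction $\mathfrak{d}_t(G^\ast)\ge k$.
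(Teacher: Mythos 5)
Your proposal is correct and follows essentially the same route as the paper: membership in NP is routine, and for $t\ge 3$ one reduces from the $(k,2)$-METRIC DIMENSION PROBLEM via the corona formula of Theorem~\ref{kt-dim-corona}, the paper taking $G\odot H$ for an arbitrary connected base $G$ of order $n$ (so that $\dim_k^t(G\odot H)=n\dim_k^2(H)$) where you fix $G=K_2$. The only difference is that you additionally verify that the constructed graph is a legitimate instance (i.e.\ $\mathfrak{d}_t(G^\ast)\ge k$), a point the paper's two-line proof leaves implicit; your verification is sound.
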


\begin{proof}
 Since verifying that a given set is a $(k,t)$-metric generator can be done in polynomial time,  the problem is in NP. Consider now any non-trivial graph $H$ and let $G$ be any connected graph of order $n\ge 2$. By Theorem \ref{kt-dim-corona}, $\dim_k^t(G\odot H)=n\dim_k^2(H)$. Thus, the reduction from the $(k,2)$-METRIC DIMENSION PROBLEM to the $(k,t)$-METRIC DIMENSION PROBLEM is deduced, and the proof is completed.
\end{proof}

\section{Concluding remarks and future
works}\label{SectionFutureWorks}

In this section we discuss some problems which are derived from  or  related to  our previous results. All these problems deserve a deeper study than we have yet given them.

\begin{itemize}

\item {\bf $(k,t)$-metric dimensional graphs.}

In Section~\ref{Sect(k,t)dimensional} we have discussed a natural problem in the study of the $k$-metric dimension of a metric
space $(X,d_t)$ which consists of finding the largest integer $k$ such that
there exists a $k$-metric generator for $X$. We have shown that, when restricted to the case of an specific graph, the problem is very easy to solve. Even so, Theorem \ref{TrivialUpperBound} shows that from a theoretical point of view it would be desirable to obtain some general results on this subject.

\item {\bf Computing the $(k,t)$-metric dimension.}

In Section~\ref{bound-k_DimG} we give some basic bounds on $\dim_k^t(G)$ and  discuss the extreme cases. We also show that there are some families of graphs having the same $(k,t)$-metric dimension and give the value of $\dim_k^t(G)$ for some particular cases.  It would be desirable to obtain specific results on $\dim_k^t(G)$ for graphs satisfying certain restriction, \emph{i.e.}, the case of product graphs. We would emphasize that the problem of computing the $(k,2)$-metric dimension ($k$-adjacency dimension) of corona product graphs remains open.

\item {\bf The simultaneous metric dimension of metric spaces}.

Given a family $\mathcal{X}=\{(X,d^{(1)}),(X,d^{( 2)}), \ldots ,(X,d^{(r)})\}$ of metric spaces, we define a \textit{simultaneous $k$-metric generator} for ${\mathcal{X}}$ to be a set $S\subseteq X$ such that $S$ is simultaneously a $k$-metric generator for each metric space $(X,d^{(i)})$. We say that a smallest simultaneous $k$-metric generator for ${\mathcal{X}}$ is  a \emph{simultaneous $k$-metric basis} of ${\mathcal{X}}$, and
its cardinality the \emph{simultaneous $k$-metric dimension} of ${\mathcal{X}}$, denoted by $\Sd_k({\mathcal{X}})$. The simultaneous $1$-metric dimension was introduced in \cite{Ramirez-Cruz-Rodriguez-Velazquez_2014}, where the families of metrics spaces are composed by graphs defined on the same vertex set, which are equipped  with the geodesic distance.

We now illustrate this with three examples.

\noindent {\bf Example 1.}
According to Theorem \ref{lexiSame_Metric_Adj_dim} we can claim that for any connected graph $G$ and
any family $\mathcal{H}$ of non-trivial graphs, the  family $\mathcal{X}$ of metric spaces obtained from a graph $G\circ\mathcal{H}$ equipped with the metrics $d_2, d_3, \ldots$,    has simultaneous $k$-metric dimension $\Sd_k({\mathcal{X}})=\dim_k^2(G\circ\mathcal{H})$.

\noindent{\bf Example 2.}
By Theorem \ref{Common_k_t_metric_generator}
we have that the family of graphs ${\mathcal{G}}_B(G)$ defined in Section \ref{SectionFamilies(k,t)Metric} equipped with the metric $d_t$ has simultaneous $k$-metric dimension $\Sd_k({\mathcal{G}})= \dim_{k}^t(G).$

\noindent{\bf Example 3.}
For many reasons in mathematics it is often convenient to work with bounded  distances. For instance, there is a simple mechanism to convert a given distance function $d(x,y)$ into (in a sense, equivalent) a bounded distance function   $d^{(i)}(x, y) = \frac{d(x,y)}{1+i d(x,y)}$, where $i$ is a positive integer. Consider a metric space $(X,d)$ and the associated family of metric spaces ${\mathcal{X}}=\{(X,d), (X,d^{(1)}),(X,d^{(2)}),\ldots\}$.
Let $x,y,z\in X$. Then   $d(x,y)\ne d(x,z)$ if and only if   $ d^{(i)}(x,y)\ne d^{(i)}(x,z)$, for all integers $i\ge 1$.  Hence,  any $k$-metric generator of $(X,d)$ is a $k$-metric generator of $(X,d^{(i)})$ and vice versa. Therefore,
$\Sd_k({\mathcal{X}})$ equals the $k$-metric dimension of $(X,d)$.

\item {\bf The lexicographic product of metric spaces}.

The lexicographic product of two metric spaces can be defined in a similar way to the lexicographic product of two graphs.
Let $(X,d)$ be a metric space. If there exists $t>0$ such that $$\min_{x,x'\in X, x\ne x'}d(x,x')=\frac{t}{2},$$ then the lexicographic product of  $(X,d)$  and a metric space $(Y,d')$   is the metric space $(X \times Y,\rho )$, where

$$\rho((x,y),(x',y'))=\left\{\begin{array}{ll}

                            d(x,x'), & \text{ if } x\ne x', \\
                            \\
                          \displaystyle \min\left\{ d'(y,y'),2\min_{z\in X\setminus \{x\}}d(x,z)\right\}, & \text{ if }  x=x'.
                            \end{array} \right.
$$

As with graphs, $X\circ Y$ always represents the metric space $(X \times Y,\rho )$, where in
this case $t$ will be understood from the context. As in the case of graphs, the definition above can be generalised to the product of a metric space times a family of metric spaces.

For any $x,x'\in X$ such that $d(x,x')=\frac{t}{2}$ and any $k$-metric generator $W$ of $X \circ Y $, the restriction of $W$ to $\{x\}\times Y$ induces a $(k,t)$-metric generator for $Y$, as two vertices in $ \{x\}\times Y$ are not distinguished by vertices outside of $\{x\}\times Y$, which implies that the projection of $W$ on $Y$, $W_Y=\{y:\, (x,y)\in W\}$, is a $(k,t)$-metric generator for $Y$. Hence, by Theorem~\ref{Dim^tUnboundedSpaces} we can conclude that if $(Y,d')$ is unbounded, then $\dim_k(X \circ Y)=+\infty$. This means that the study of the $k$-metric dimension of $X \circ Y $ should be restricted to cases where the second factor is bounded.

As an example we consider a simple and connected graphs $G=(V,E)$ of order $n$
 and a (non-necessarily bounded) metric space $(Y,d')$, where $|Y|\ge 2$. Then we construct the lexicographic product $G\circ Y$ from  the graph $G$ and the metric space $(Y,d^{(1)})$ equipped with the metric $d^{(1)}=\frac{d'}{1+d'}$. In this case, it is not difficult to check that
the $k$-metric dimension of $G\circ Y$ equals $n$ times the $(k,t)$-metric dimension of $(Y,d^{(1)})$. We leave
the details to the reader.

\item {\bf Computational complexity.}

Theorem \ref{complex-kt-dim-k-odd} allows to claim that computing the $(k,t)$-metric dimension of graphs is NP-hard for the case in which $k$ is an odd integer. It is probably not surprising that the case $k$ even has similar complexity. However, this case remains open and it would be interesting to complete this study. Moreover, in concordance with the NP-hardness of the problem, it might deserve to develop some approximation algorithms for this general approach as those ones already known for the standard metric dimension (see for instance \cite{Hartung2013} and \cite{Hauptmann2012}).

\item {\bf Practical applications.}

The metric dimension of graphs in its standard version ($(k,t)$-metric dimension with $k=1$ and $t$ as the diameter of $G$) has found in the last decades several applications to practical problems. For instance, the author of \cite{Johnson1993,Johnson1998} rediscovered the concept of metric dimension while  investigating some topics in chemistry. Applications to problems of pattern recognition and image processing appeared in \cite{Melter1984} and to navigation of robots in networks in \cite{Chartrand2000,Hulme1984,Khuller1996}. Also, some connections between the metric dimension and the Mastermind game or coin weighing have been presented in \cite{Caceres2007}. Furthermore,  in the recent work \cite{Bailey2016}, the $k$-metric dimension ($(k,t)$-metric dimension with $k>1$ and $t$ as the diameter of $G$) has found an interesting application while designing error-correcting codes. In this sense, it is natural to look also for some possible applications for this new general approach introduced in this work.

\end{itemize}

\bibliographystyle{elsart-num-sort}

\end{document}